\newcommand{\re}{{\rm e}}
\newcommand{\ri}{{\rm i}}
\newcommand{\e}{\mathbf{e}_1}
\def\IZ{{\mathbb Z}}
\def\IR{{\mathbb R}}
\def\IC{{\mathbb C}}
\def\IH{{\mathbb H}}
\def\IQ{{\mathbb Q}}
\def\IP{{\mathbb P}}
\newcommand{\mrho}{\mathsf{\rho}}
\newcommand{\CA}{{\cal A}}
\newcommand{\CB}{{\cal B}}
\newcommand{\CC}{{\cal C}}
\newcommand{\CS}{{\cal S}}
\def\frakg{\mathfrak{g}}
\def\frakf{\mathfrak{f}}
\def\tfrakg{\tilde{\mathfrak{g}}}
\def\tfrakf{\tilde{\mathfrak{f}}}
\newcommand{\be}{\begin{equation}}
\newcommand{\ee}{\end{equation}}
\newcommand{\ba}{\begin{aligned}}
\newcommand{\ea}{\end{aligned}}
\newcommand{\borel}{\mathcal{B}}
\newtheorem{theorem}{Theorem}[section]
\newtheorem{prop}[theorem]{Proposition}
\newtheorem{cor}[theorem]{Corollary}
\newtheorem*{definition*}{Definition}
\newtheorem{definition}{Definition}[section]
\newtheorem{conjecture}{Conjecture}
\newtheorem*{conjecture*}{Conjecture}
\newtheorem{rmk}{Remark}[section]
\theoremstyle{definition}
\newtheorem{example}{Example}
\title{\huge{\textbf Modular resurgent structures}}
\author{Veronica Fantini$^a$ and Claudia Rella$^b$}
\affiliation{${}^a$IH\'ES, 91440 Bures-sur-Yvette, France \\ ${}^b$D\'epartement de Physique Th\'eorique, Universit\'e de Gen\`eve, CH-1211 Gen\`eve, Switzerland}
\emailAdd{fantini@ihes.fr}
\emailAdd{claudia.rella@unige.ch}
\abstract{The theory of resurgence uniquely associates a factorially divergent formal power series with a collection of exponentially small non-perturbative corrections paired with a set of complex numbers known as Stokes constants.
When the Borel plane displays a single infinite tower of singularities, the secondary resurgent series are trivial, and the Stokes constants are coefficients of an $L$-function, a rich analytic number-theoretic fabric underlies the resurgent structure of the asymptotic series. We propose a new paradigm of \emph{modular resurgence} that focuses on the role of the Stokes constants and the interplay of the $q$-series acting as their generating functions with the corresponding $L$-functions.
Guided by two pivotal examples arising from topological string theory and the theory of Maass cusp forms, we introduce the notion of \emph{modular resurgent series}, which we conjecture to have specific summability properties as well as to be related to quantum modular forms.}
\gdef\@fpheader{\null}
\begin{document}

\maketitle
\flushbottom

\section{Introduction}\label{sec:intro}
Resurgence, summability, and quantum modularity are active areas of research involving several communities, whose interactions have so far mostly been explored through examples arising from quantum Chern--Simons (CS) theory, particularly those related to the quantum invariants of knots and 3-manifolds. Motivated by the study of quantum invariants of 3-manifolds~\cite{qCS1}, quantum modular forms were introduced by Zagier in~\cite{zagier_modular}. Since then, new examples have been constructed~\cite{BKM1,BKM2,goswami2021quantum-theta}, and their role in quantum CS theory has been investigated both for 3-manifold invariants~\cite{qCS3,CCFGH,Cheng1,Cheng2,Wheeler:2023cht} and for knot invariants~\cite{Garoufalidis:2013rca,DG,Garoufalidis_Zagier_2023}. In parallel, the perturbative nature of complex CS theory has led to the study of resurgent structures and summability in quantum invariants of both 3-manifolds~\cite{qCS5,qCS7,crew-goswami-osburn} and knots~\cite{costin-garoufalidis,GGuM,GGuMW}. While the interplay between the analytic and arithmetic properties of these invariants was first described in~\cite{aritm-resurgence-G}, a general theory that integrates these three perspectives into a unified framework is still missing. In this paper, we take a first step to address this gap.

We take advantage of the abundant amount of information gathered from the in-depth resurgent analysis of the spectral trace of the toric Calabi--Yau (CY) threefold known as local $\IP^2$ performed by the second author in~\cite{Rella22}. This satisfies a full-fledged strong-weak resurgent symmetry and special modularity and summability properties that we present in the companion paper~\cite{FR1phys}. 
Generalizing the cardinal features of this example, we describe a new class of resurgent asymptotic series whose median resummation produces holomorphic quantum modular forms in the sense of Zagier~\cite{zagier_modular, zagier-talk}.
This newly proposed framework, which we refer to as \emph{modular resurgence}, deeply relies on the key role played by the Stokes constants and the $q$-series acting as their generating functions. Our results naturally lead us to present the definition of a \emph{modular resurgent series} (MRS), that is, a Gevrey-1 asymptotic series whose Borel plane displays an infinite tower of equally spaced singularities, the secondary resurgent series are trivial, and the Stokes constants are coefficients of an $L$-function. 
The resurgent structure of such series is called a \emph{modular resurgent structure}.
\begin{definition*}[Def.~\ref{def:modular_res_struct}]
A Gevrey-1 asymptotic series $\tfrakg \in \IC \llbracket y\rrbracket$ has a \emph{modular resurgent structure} if the following conditions hold.
\begin{enumerate}
    \item The Borel transform $\CB[\tfrakg] \in \IC\{\zeta\}$ has a tower of singularities at the locations $\zeta_m= \CA m$, $m\in\mathbb{Z}_{\ne 0}$, for some constant $\CA \in \IC$, in the complex $\zeta$-plane.
    \item For every $m\in\IZ_{\ne 0}$, the resurgent series at the singularity $\zeta_m$ is the constant function $S_m\in\IC$, \emph{i.e.}, the Stokes constant.
    \item The Stokes constants $S_m$, $m\in\IZ_{\ne 0}$, are the coefficients of two $L$-functions
    \be
		L_{+}(s)=\sum_{m >0} \frac{S_{m}}{m^s} \,, \quad L_{-}(s)=-\sum_{m >0} \frac{S_{- m}}{m^s} \, .
     \ee
\end{enumerate}
A Gevrey-1 asymptotic series with a modular resurgent structure is a \emph{modular resurgent series}.
\end{definition*}
It follows from the definition that, after the change of variable $y \mapsto -\tfrac{\CA}{2 \pi \ri y}$, the discontinuity of $\tfrakg(y)$ across its Stokes line equals the generating $q$-series $\frakf(y)$ of the Stokes constants with $q=\re^{2 \pi \ri y}$. The latter can be equivalently expressed as inverse Mellin transform of the $L$-functions $L_\pm(s)$.

Under mild assumptions, we show that a global network of exact relations connects the complete resurgent structures of canonical pairs of MRSs $\tfrakg, \tfrakf \in \IC[\![y]\!]$, in the same way that the weak and strong coupling perturbative expansions of the spectral trace of local $\IP^2$ are related by an exact resurgent symmetry~\cite{Rella22, FR1phys}. This construction, which we refer to as \emph{modular resurgence paradigm}, rests on the analytic properties of the $L$-functions $L_\pm(s), L_\pm'(s)$, $s \in \IC$, whose coefficients are the Stokes constants $\{S_m\}, \{R_m\}$, $m \in \IZ_{\ne 0}$, of the MRSs $\tfrakg, \tfrakf$, respectively. 
In particular, each MRS can be identified with the asymptotic expansion for $y \to 0$ of the discontinuity of the other after a change of variable, while the corresponding $L$-functions analytically continue each other as they share a combined \emph{functional equation}. We illustrate the paradigm schematically via the commutative diagram below, where $\frakf(y)$ and $\frakg(y)$ are the $q$-series with $q=\re^{2 \pi \ri y}$ that generate the Stokes constants of the asymptotic series $\tfrakg(y)$ and $\tfrakf(y)$, respectively. We refer the reader to Section~\ref{sec:resurgence--modularity} for a detailed explanation of the arguments behind it. 
\begin{equation}
\begin{tikzcd}[column sep=1.9em, row sep=2.8em]
       L_\pm'(s) \arrow[rrr,"\text{inverse Mellin}"]\arrow[ddrrrrrrr,sloped,"\text{functional equation}"] & & & \mathrm{disc} [\tfrakf](-\tfrac{\CA'}{2 \pi \ri y}) = \mathfrak{g}(y) \arrow[r,"y \rightarrow 0"] &  \tilde{\mathfrak{g}}(y) \arrow[rrr,"\text{resurgence}"] 
       & & & \{ S_m \}\arrow[dd,sloped,"\text{$L$-function}"] \\  \\
       \{ R_m \} \arrow[uu,sloped,"\text{$L$-function}"] & & &\tilde{\mathfrak{f}}(y) \arrow[lll,"\text{resurgence}"] 
       &  \mathfrak{f}(y) =  \mathrm{disc} [\tfrakg](-\tfrac{\CA}{2 \pi \ri y})  \arrow[l,"y \rightarrow 0"] & & & L_\pm(s) \arrow[lll,"\text{inverse Mellin}"]\arrow[uulllllll,sloped,swap,"\text{}"]
\end{tikzcd}
\end{equation}

Guided by a diverse set of examples that provide supporting evidence, we propose that certain $q$-series whose asymptotic expansion has a modular resurgent structure are holomorphic quantum modular forms and can be reconstructed via median resummation.
\begin{conjecture*}[Conj.~\ref{conj:quantum_modular1}]
Let $\frakg\colon\IH\to\IC$ be a $q$-series where $q=\re^{2 \pi \ri y}$. If its Mellin transform is an $L$-function and its asymptotic expansion $\tfrakg(y)$ as $y\to 0$ with $\Im(y)>0$ has a modular resurgent structure, then the \emph{median resummation} of $\tfrakg(y)$ reconstructs the original function $\frakg(y)$, that is, 
\be
\mathcal{S}_\theta^{\mathrm{med}}[\tfrakg](y)=\frakg(y) \, , \quad y\in\IH \cap \{\Re ( \re^{-\ri\theta} y)>0\} \, ,
\ee
where $\theta$ is the argument of the singularities in the Borel plane.
\end{conjecture*}
\begin{conjecture*}[Conj.~\ref{conj:quantum_modular2}]
Let $\frakg\colon\IH\to\IC$ be a $q$-series where $q=\re^{2 \pi \ri y}$. If its asymptotic expansion $\tfrakg(y)$ as $y\to 0$ with $\Im(y)>0$ has a modular resurgent structure, then the function $\frakg(y)$ is a \emph{holomorphic quantum modular form} for a subgroup $\Gamma\subseteq\mathsf{SL}_2(\mathbb{Z})$.
\end{conjecture*}
Beyond the original example of the spectral trace of local $\IP^2$ of~\cite{Rella22, FR1phys}, which provided a blueprint for this work and is revisited in Section~\ref{sec:example-P2}, our conjectural statements are proven in Theorems~\ref{thm:lewis-Zagier_QM} and~\ref{thm:lewis-Zagier_median} for a large class of examples originating from the theory of Maass cusp forms. In particular, due to Maass~\cite{Maass}, it is well-known that a Maass cusp form can be equivalently described in terms of a suitable pair of $L$-functions. Appealing to results of~\cite{lewis-Zagier--period}, we show that these pairs of $L$-functions naturally give rise to MRSs when the spectral parameter of the Maass cusp form is $1/2$.
Finally, we prove in Theorem~\ref{thm:modular} that MRSs occur in the asymptotic expansion of holomorphic functions that are built from general cusp forms at the cost of breaking modular invariance. These holomorphic functions are not in the form of $q$-series and yet are shown to verify the same summability and quantum modular properties.
We expect further examples of MRSs and evidence of our conjectures to come from the study of quantum invariants of knots and 3-manifolds, combinatorics, and topological strings on (toric) CY threefolds.

\medskip
This paper is organized as follows. In Section~\ref{sec: background}, we review the basic ingredients in the resurgent analysis of formal power series and the definition of (holomorphic) quantum modular forms. In Section~\ref{sec:resurgence--modularity}, we delve into the analytic number-theoretic characterization of those resurgent structures that give rise to quantum modular forms according to the newly introduced paradigm of modular resurgence. We then present the definition of a modular resurgent series, which we conjecture to have the quantum modularity and summability properties described above. In Section~\ref{sec:examples}, we review the example of the spectral trace of local $\IP^2$, briefly summarizing the strong-weak resurgent symmetry of~\cite{FR1phys}, prove our conjectures for a set of examples originating from the theory of Maass cusp forms, and analyse the case of more general modular forms.
Finally, in Section~\ref{sec:conclusion}, we conclude and mention further perspectives and questions to be addressed in future works.

\section{Basics of resurgence and quantum modularity} \label{sec: background}
In this section, we review the necessary background notions in resurgence and quantum modularity. 
On the one hand, the theory of resurgence was developed by \'Ecalle~\cite{EcalleI} in the early 1980s to investigate the properties of factorially divergent formal power series beyond classical asymptotics. In favorable circumstances, the resurgent analysis of asymptotic series unveils a universal mathematical structure of non-perturbative sectors involving a collection of numerical data called Stokes constants. See~\cite{diver-book} for an introduction to resurgence.
On the other hand, modular forms have a long history that can be traced back all the way to the nineteenth century and are of interest in disparate fields. On the whole, the theory of modular forms describes the invariant properties of certain $q$-series under the action of the modular group. However, there are examples of $q$-series that are not modular forms and yet whose failure of modularity is somewhat well-behaved and can be described explicitly. These belong to the class of quantum modular forms introduced by Zagier~\cite{zagier_modular}.
We point the reader to~\cite[Chapter IV]{wheeler-thesis} and references therein for an overview. 

\subsection{Resurgent asymptotic series}\label{sec:resurgence}
The Borel transform $\CB\colon z^{-\alpha}\IC[\![z]\!]\to\zeta^{-\alpha} \IC\{\zeta\}$, where $z, \zeta$ are formal variables and $\alpha \in \IR \backslash \IZ_{\geq 0}$, acts on $z$-monomials as
\begin{equation} \label{def:borel}
    \borel[z^{n-\alpha}]:=\frac{\zeta^{n-\alpha-1}}{\Gamma(n-\alpha)}\, , \quad n \in \IZ_{\ge 0} \, , 
\end{equation}
where $\Gamma(n-\alpha)$ is the gamma function. 
The action of the Borel transform above is extended by linearity to all formal power series in $z^{-\alpha}\IC[\![z]\!]$, while the Borel transform of the identity is conventionally denoted with the symbol $\delta:=\borel[1]$.
Let $\phi(z)$ be a \emph{Gevrey-1 asymptotic series} of the form 
\be \label{eq: phi}
\phi(z) = z^{-\alpha} \sum_{n=0}^{\infty} a_n z^n \in z^{-\alpha} \IC[\![z]\!] \, , \quad |a_n|\le n!|\CA|^{-n} \,,
\ee
where $\alpha \in \IR \backslash \IZ_{\ge 0}$ and $\CA \in \IC$. 
Its Borel transform, which we denote by $\hat{\phi}(\zeta):=\borel[\phi](\zeta)$ for simplicity, is the series
\be \label{eq: phihat}
\hat{\phi}(\zeta) = \sum_{k=0}^{\infty} \frac{a_k}{\Gamma(k-\alpha)} \zeta^{k-\alpha-1} \in \zeta^{- \alpha} \IC\{\zeta\} \, ,
\ee
which is convergent in the open disk centered at $\zeta=0$ of radius $|\CA|$. 
When extended to the whole complex $\zeta$-plane, known as the Borel plane, $\hat{\phi}(\zeta)$ shows a (possibly infinite) set of singularities $\zeta_{\omega} \in \IC$, which we label by the index $\omega \in \Omega$. 
A ray in the Borel plane of the form
\be \label{eq: ray}
\CC_{\theta_{\omega}} = \re^{\ri \theta_{\omega}} \IR_{\ge 0} \, , \quad \theta_{\omega} = \arg (\zeta_{\omega}) \, ,
\ee 
which starts at the origin and passes through a singularity $\zeta_{\omega}$, is called a Stokes ray.
The Borel plane is partitioned into sectors bounded by the Stokes rays in such a way that the Borel transform admits (generally different) analytic continuation in each sector.

\begin{definition}\label{def:resurgence}
A Gevrey-1 series $\phi\in z^{-\alpha}\IC[\![z]\!]$ is \emph{resurgent} if its Borel transform $\hat{\phi}\in\zeta^{-\alpha}\IC\{\zeta\}$ can be endlessly analytically continued. Namely, for every $L>0$, there is a finite set of points $\Omega_L$ on the Riemann surface of $\zeta^{-\alpha}$ such that $\hat{\phi}$ can be analytically continued along any smooth path of length at most $L$ that avoids $\Omega_L$ and starts from a fixed point in a neighborhood of the origin.
If, additionally, the Borel transform of $\phi$ has only simple poles and logarithmic branch points, it is called \emph{simple resurgent}.
\end{definition}

Let us assume that the formal power series in Eq.~\eqref{eq: phi} is simple resurgent. If the singularity $\zeta_{\omega}$ is a simple pole, the local expansion around $\zeta=\zeta_{\omega}$ of the Borel transform in Eq.~\eqref{eq: phihat} has the form\footnote{By slight abuse of notation, we also denote as $\hat{\phi}$ the analytic continuation of the Borel transform.} 
\be \label{eq: Stokes0}
\hat{\phi}(\zeta) = - \frac{S_{\omega}}{2 \pi \ri (\zeta - \zeta_{\omega})} + \text{regular in $\zeta-\zeta_\omega$} \, ,
\ee
where $S_{\omega} \in \IC$ is the Stokes constant at $\zeta_{\omega}$.
Whereas, if the singularity $\zeta_{\omega}$ is a logarithmic branch point, the local expansion around $\zeta=\zeta_{\omega}$ of the Borel transform is
\be \label{eq: Stokes-log}
\hat{\phi}(\zeta) = - \frac{S_{\omega}}{2 \pi \ri} \log(\zeta - \zeta_{\omega}) \hat{\phi}_{\omega}(\zeta - \zeta_{\omega}) + \text{regular in $\zeta-\zeta_\omega$} \, ,
\ee
where again $S_{\omega} \in \IC$ is the Stokes constant and the formal power series
\be \label{eq: phihat2}
\hat{\phi}_{\omega}(\zeta) = \sum_{k=0}^{\infty} \hat{a}_{k, \omega} \zeta^{k-\beta} \in \zeta^{-\beta}\IC\{\zeta \}\, ,
\ee
where $\beta\in\IR\setminus\IZ_{\ge 0}$, can be regarded as the Borel transform of the Gevrey-1 asymptotic series 
\be \label{eq: phi2}
\phi_{\omega}(z) = z^{-\beta} \sum_{n=0}^{\infty} a_{n, \omega} z^n \in z^{-\beta} \IC[\![z]\!] , \, \quad a_{n, \omega} = \Gamma(n-\beta+1) \, \hat{a}_{n, \omega} \, .
\ee
Note that the value of the Stokes constant $S_{\omega}$ depends on the normalization of $\phi_{\omega}$.

\begin{rmk}\label{rmk:new_series}
One of the fundamental characteristics of resurgent asymptotic series is the emergence of new series from the behavior of their analytically continued Borel transforms near the singularities. In particular, in the case of a simple pole, the Stokes constant, namely, the residue $S_\omega$ in Eq.~\eqref{eq: Stokes0}, contains all the resurgent information. When the singularity is, instead, a logarithmic branch point, a new germ of an analytic function, namely, the series $S_\omega \hat{\phi}_\omega$ in Eq.~\eqref{eq: Stokes-log}, emerges from the local behavior of the Borel transform.
\end{rmk}

If the analytic continuation of the Borel transform in Eq.~\eqref{eq: phihat} does not grow too fast at infinity at an angle $0 \le \theta < 2 \pi$ in the Borel plane,\footnote{We require that $\hat{\phi}(\zeta)$ grows at most exponentially in a tubular neighbourhood of the Borel plane containing the angle $\theta$~\cite[Section~5.6]{diver-book}.} its Laplace transform along the direction $\theta$ gives the Borel--Laplace sum of the original, divergent formal power series $\phi(z)$ in the same direction, which we denote by $s_{\theta}[\phi](z)$.
Explicitly,
\be \label{eq: Laplace}
s_{\theta}[\phi](z) = \int_0^{\re^{\ri \theta} \infty} \re^{-\zeta/z} \hat{\phi}(\zeta) \, d \zeta = z \int_0^{\re^{\ri \theta} \infty} \re^{-\zeta} \hat{\phi}(\zeta z) \, d \zeta \, , 
\ee
whose asymptotics as $z\to 0$
reproduces $\phi(z)$~\cite[Theorem~5.20]{diver-book}. 
\begin{definition}
Let $\phi\in z^{-\alpha}\IC[\![z]\!]$ be a resurgent Gevrey-1 asymptotic series. If the Borel--Laplace sum of $\phi$ at an angle $0 \le \theta < 2 \pi$, defined as in Eq.~\eqref{eq: Laplace}, exists in some region of the complex $z$-plane, we say that the series $\phi$ is Borel--Laplace summable along the direction $\theta$. 
\end{definition}
Note that the Borel--Laplace sum inherits the sectorial behavior of the Borel transform. Specifically, it is a locally analytic function with discontinuities across the special rays identified by 
\be
\arg(z)=\arg(\zeta_{\omega}) \, , \quad \omega \in \Omega \, .
\ee 
\begin{definition}\label{def:disc}
The discontinuity across an arbitrary ray $\CC_{\theta} = \re^{\ri \theta} \IR_{\ge 0}$ in the complex $z$-plane is the difference between the Borel--Laplace sums along two rays that lie slightly above and slightly below $\CC_{\theta}$. Namely,
\be \label{eq: disc}
\mathrm{disc}_{\theta}[\phi](z) = s_{\theta_+}[\phi](z) - s_{\theta_-}[\phi](z) = \int_{\mathcal{C}_{\theta_+} - \, \mathcal{C}_{\theta_-}} \re^{-\zeta/z} \hat{\phi}(\zeta) \,  d \zeta \, , 
\ee
where $\theta_{\pm}= \theta \pm \epsilon$ for some small positive angle $\epsilon$.
\end{definition}
A standard contour deformation argument implies that the two lateral Borel--Laplace sums differ by exponentially small terms. More precisely, if $\hat{\phi}(\zeta)$ has only simple poles, then
\be \label{eq: Stokes1-poles}
\mathrm{disc}_{\theta}[\phi](z) = \sum_{\omega  \in \Omega_{\theta}} S_{\omega} \re^{-\zeta_{\omega}/z}  \, ,
\ee
where the index $\omega \in \Omega_{\theta}$ labels the singularities $\zeta_{\omega}$ such that $\arg (\zeta_{\omega}) = \theta$ and the complex numbers $S_{\omega}$ are the same Stokes constants that appear in Eq.~\eqref{eq: Stokes0}.
Similarly, if there are only logarithmic branch points, then
\be \label{eq: Stokes1}
\mathrm{disc}_{\theta}[\phi](z) = \sum_{\omega  \in \Omega_{\theta}} S_{\omega} \re^{-\zeta_{\omega}/z} s_{\theta_-}[\phi_{\omega}](z) \, ,
\ee
where again the complex numbers $S_{\omega}$ are the Stokes constants appearing in Eq.~\eqref{eq: Stokes-log}, while $\phi_{\omega}(z)$ is the formal power series in Eq.~\eqref{eq: phi2}.

\subsubsection*{Effectiveness of the median resummation}
In the study of the summability properties of factorially divergent formal power series that arise as asymptotic limits of well-defined holomorphic functions, it is natural to ask which summability method is efficient in reproducing the original function from its asymptotic expansion. 
In some examples, such as the ones discussed in~\cite{borel_reg}, the Borel--Laplace sum is effective. However, that is not always the case. In order to reconstruct certain $q$-series from their asymptotic expansion, one has to consider the so-called median resummation. 
\begin{definition}\label{def:median}
The median resummation of the Gevrey-1 asymptotic series $\phi\in\IC\llbracket z\rrbracket$ in Eq.~\eqref{eq: phi} across an arbitrary ray $\CC_{\theta} = \re^{\ri \theta} \IR_{\ge 0}$ in the complex $z$-plane is defined as the average of the two lateral Borel--Laplace sums $s_{\theta_\pm}[\phi](z)$, that is,
\be \label{eq: median}
\mathcal{S}^{\mathrm{med}}_{\theta}[\phi](z) := \frac{s_{\theta_+}[\phi](z) + s_{\theta_-}[\phi](z)}{2} \,.
\ee
\end{definition}
Notice that $\mathcal{S}^{\mathrm{med}}_{\theta}[\phi](z)$ is an analytic function for $\arg z\in (\theta-\tfrac{\pi}{2},\theta+\tfrac{\pi}{2})$. 
In addition, by a contour deformation argument, we can equivalently write it as
\begin{equation} \label{eq: median2}
    \mathcal{S}^{\mathrm{med}}_{\theta}[\phi](z) =\begin{cases}
        s_{\theta_-}[\phi](z)+\frac{1}{2}\,\mathrm{disc}_{\theta}[\phi](z) \, , \quad & \Re \left( \re^{-\ri\theta_{-}}z \right)>0 \, , \\
        & \\
        s_{\theta_+}[\phi](z)-\frac{1}{2}\,\mathrm{disc}_{\theta}[\phi](z) \, , \quad & \Re \left( \re^{-\ri\theta_{+}}z \right)>0 \, ,
    \end{cases}
\end{equation}
where $\Re \left( \re^{-\ri\theta} z \right)>0$.
The expression in Eq.~\eqref{eq: median2} highlights how the discontinuities can be interpreted as corrections to the standard Borel--Laplace sums $s_{\theta_\pm}[\phi](z)$. In addition, by varying the contour of integration of the Laplace transform, the domain of analyticity of the median resummation can be extended. 

\begin{rmk}
    In the context of quantum CS theory, Costin and Garoufalidis conjectured that quantum invariants, such as the Kashaev invariant for knots and the WRT invariant for $3$-manifolds, are reconstructed by the median resummation of their Gevrey-1 asymptotic series at roots of unity~\cite[Conjecture~1]{costin-garoufalidis}. This was proven in~\cite[Theorem~3]{qCS7} for Seifert fibered homology spheres and more recently in~\cite[Corollary~1.5]{crew-goswami-osburn} for certain torus knots. As we show in~\cite[Theorem~4.6]{FR1phys}, further evidence of the effectiveness of the median resummation comes from the study of the spectral traces of quantum-mechanical operators obtained by quantizing the mirror curve to a toric CY threefold.
\end{rmk}

\subsection{Quantum modular forms} \label{sec:modularity}
A first description of quantum modular forms was given by Zagier in~\cite{zagier_modular}. Namely, a function $f\colon\IQ\to\IC$ is called a quantum modular form of weight $\omega\in\frac{1}{2}\IZ$ with respect to a subgroup $\Gamma\subseteq\mathsf{SL}_2(\IZ)$ if the cocycle
\begin{equation}\label{cocycle}
     h_\gamma[f](y):=(cy+d)^{-\omega} f\left(\frac{ay+b}{cy+d}\right) - f(y)
\end{equation} 
is \emph{better behaved} than $f(y)$ for all choices of $\gamma=\left( \begin{smallmatrix}
    a & b\\
    c & d
\end{smallmatrix} \right)\in\Gamma$. More precisely, $h_\gamma[f](y)$ has better analytic properties than the function $f(y)$ itself---\emph{e.g.}, being real analytic over $\IR\setminus\{\gamma^{-1}(\infty)\}$. Quantum modular forms of weight zero are called quantum modular functions.

\begin{example} 
Some examples of quantum modular forms are built from (classical) modular forms. For instance, it was proven in~\cite{zagier2001vassiliev} that the function $g\colon\IQ\to\IC$ defined by 
\be\label{eq:trefoil}
g(y):=q^{1/24}\Big(1+\sum_{n=1}^\infty (1-q)(1-q^2)\ldots(1-q^n) \Big)\,, \quad q=\re^{2\pi\ri y}\,,
\ee
which is closely related to the Kashaev invariant for the trefoil knot~\cite{costin-garoufalidis}, is a weight $3/2$ quantum modular form for $\mathsf{SL}_2(\IZ)$. Its cocycle is written explicitly as
\be\label{eq:cocycle-eta}
h_\gamma[g](y)=\int_{\gamma^{-1}(\infty)}^\infty \eta(t)(y-t)^{-3/2} dt\, , \quad \gamma\in\mathsf{SL}_2(\IZ) \, , 
\ee
where $\eta(y)=q^{1/24}\prod_{n=1}^\infty (1-q^n)=\sum_{k=1}^\infty\chi(k) q^{k^2/24}$ is the Dedekind eta function and $\chi$ is the Dirichlet character modulo $12$ with $\chi(\pm 1)=1$ and $\chi(\pm 5)=-1$.
As shown in~\cite{zagier2001vassiliev}, the function $g$ agrees to all orders in $q$ with the radial limit of the Eichler integral of $\eta$, that is, 
\be\label{eq:strange_id}
g(y)``=" -\frac{1}{2}\sum_{k=1}^\infty k\chi(k) q^{k^2/24} \, .
\ee
Besides, the cocycle in Eq.~\eqref{eq:cocycle-eta} is real analytic on $\IR\setminus\{\gamma^{-1}(\infty)\}$ and is expressed in terms of the original weight-1/2 modular form $\eta$. This is a common feature of those quantum modular forms that arise from a classical counterpart and is manifest in several examples. See~\cite[Examples~1,~3,~4]{zagier_modular} and~\cite{bringmann-q-hyper,goswami2021quantum-theta,BKM1,BKM2}. Other instances of quantum modular forms arising from the quantum invariants of hyperbolic knots have cocycles with a more complicated structure~\cite{garoufalidis_zagier_2023knots}, and whether they are related to modular forms is still unknown.
\end{example}

The Eichler integral appearing on the right-hand side (RHS) of Eq.~\eqref{eq:strange_id} is well-defined and holomorphic in the upper half of the complex plane, which we denote by $\IH = \IH_+$. In fact, it is natural to ask whether holomorphic functions on $\IH$ can be thought of as quantum modular forms in an appropriate way. Although a holomorphic function $f\colon\IH\to\IC$ already has good analyticity properties, a suitable notion of quantum modularity is obtained by requiring that the cocycle $h_\gamma[f]$ in Eq.~\eqref{cocycle} is analytic in a domain larger than $\IH$. 
Quantum modular forms that belong to this category are called holomorphic quantum modular forms, as defined by Zagier~\cite[min 21:45]{zagier-talk}.
\begin{definition} \label{def: holoQM}
    A holomorphic function $f\colon\IH\to\IC$ is a \emph{holomorphic quantum modular form} of weight $\omega\in\frac{1}{2}\IZ$ for a subgroup $\Gamma\subseteq\mathsf{SL}_2(\IZ)$ if the cocycle $h_\gamma[f]\colon\IH\to\IC$ in Eq.~\eqref{cocycle} extends holomorphically to\footnote{Note that $\IC_\gamma = \IC \setminus \left(-\infty; \, -d/c\right]$ when $c>0$ and $\IC_\gamma = \IC \setminus \left[-d/c; \, +\infty \right)$ when $c<0$.}  
    \be
    \IC_\gamma:=\{y\in\IC\colon cy+d\in\IC'=\IC\setminus\IR_{\leq 0}\}
    \ee
    for every $\gamma=\left( \begin{smallmatrix}
    a & b\\
    c & d
\end{smallmatrix} \right) \in\Gamma$.
\end{definition}
We remark that the definition above can be analogously written for a function $f\colon\IH_{-}\to\IC$, where $\IH_{-}$ denotes the lower half of the complex plane. 

\begin{example}
Following~\cite{zagier_modular}, let $\Delta\colon\IH\to\IC$ be the modular discriminant of the Dedekind eta function, that is, 
\be
\Delta(y):=\eta(y)^{24}= q\prod_{n=1}^\infty (1-q^n)^{24}\, ,\quad q=\re^{2\pi\ri y} \, .
\ee
It was shown in~\cite[Example~2]{zagier_modular} that its Eichler integral $\tilde{\Delta}\colon\IH\to\IC$, which is given by
\be\label{eq:delta-tilde}
\tilde{\Delta}(y) = -\frac{(2\pi\ri)^{11}}{10!}\int_{y}^\infty \Delta(t) (y-t)^{10} dt \,,
\ee
is a holomorphic quantum modular form for $\mathsf{SL}_2(\IZ)$ of weight $-10$. Its cocycle 
\be\label{eq:cocycle-Delta}
h_\gamma[\tilde{\Delta}](y)=\frac{(2\pi\ri)^{11}}{10!}\int_{\gamma^{-1}(\infty)}^\infty \Delta(t) (y-t)^{10} dt \,, \quad \gamma\in\mathsf{SL}_2(\IZ) \, ,
\ee
is known to be a polynomial of degree less than or equal to $10$ and therefore holomorphic on $\IC$. This is an exceptional example since, typically, the cocycle only extends to some cut plane, \emph{e.g.}, $\IC\setminus\IR_{\leq 0}$.
\end{example}

Recall that the adjective \emph{quantum} attached to modular forms indicates the failure of invariance under the action of a subgroup $\Gamma\subseteq\mathsf{SL}_2(\IZ)$. 
Quantum modular forms therefore constitute a much larger set than classical modular forms, and fully characterizing them is a rather complicated endeavor. However, when the failure of modularity is measured by a better-behaved analytic function with divergent asymptotics, the theory of resurgence appears to be a remarkably useful tool. In this paper, we put forward a proposal to identify a common structure behind the divergent asymptotic behavior of the cocycle via resurgence and exploit it to gain further understanding and control over these quantum modular objects. This argument is detailed in Section~\ref{sec: modular-structures} and evidence from the spectral theory of CY threefolds and the theory of Maass cusp forms and their periods is discussed in Sections~\ref{sec:example-P2} and~\ref{sec:example-Maass}. 

\section{Modular resurgence}\label{sec:resurgence--modularity}
In this section, we consider a particular class of Gevrey-1 asymptotic series that replicate the central features of the resurgence of the spectral trace of local $\IP^2$, which has been extensively studied in~\cite{Rella22, FR1phys}. Backed by this and other examples discussed in Section~\ref{sec:examples}, we propose a new perspective on the resurgent structure of these asymptotic series by taking the point of view of the $q$-series acting as generating functions of the Stokes constants and the $L$-functions acting as their Dirichlet series. As we shall see, a cardinal role in this paradigm of \emph{modular resurgence} is played by the functional equation satisfied by these \emph{resurgent $L$-functions}. When all Borel singularities are simple poles, we describe a global, exact resurgent symmetry connecting pairs of what we define as \emph{modular resurgent series} and present a conjectural relation between MRSs and quantum modular forms. 

\subsection{New perspectives on resurgence via \texorpdfstring{$L$}{L}-functions}\label{sec:new-resur-Lfunct}
One of the main features of resurgent asymptotic series is that one can compute the non-perturbative information hidden in the divergence of their coefficients by studying the structure of singularities in the Borel plane. Namely, starting from a germ at the origin, that is, the Borel transform of the asymptotic series, and analytically continuing it to the whole complex plane, it is possible to reconstruct a new germ at each singular point, as briefly reviewed in Section~\ref{sec:resurgence}. This collection of resurgent germs of analytic functions dictates the exponentially small corrections to be added to the original, divergent formal power series.

In the example of the spectral trace of local $\IP^2$, the resurgent structure appears trivial at first glance, as the germs at the singularities are all constants. Yet, as first discovered in~\cite{Rella22} and fully decoded in~\cite{FR1phys}, when globally viewed as a sequence, these constants possess rich analytic number-theoretic properties that make contact with the realm of $L$-functions and quantum modular forms. Furthermore, when considering both weak and strong coupling limits, a unique strong-weak resurgent symmetry holds, exchanging the perturbative and non-perturbative data in the two regimes. Since these features can be generalized to a whole new class of asymptotic series, the results of~\cite{Rella22, FR1phys} encourage us to adopt a novel perspective and motivate the following discussion.

Let us start by clarifying the notions of $L$-series and $L$-function adopted throughout this work. 
\begin{definition}
A \emph{Dirichlet series} is a formal series of the type 
\be \label{eq:dirichlet-def}
F(s)=\sum_{m=1}^\infty \frac{A_m}{m^s} \, , 
\ee
where $s$ is a formal variable and $\{A_m\}$ is a sequence of complex numbers. 
If the series converges absolutely in the right half-plane $\{\Re(s)>\alpha\} \subset \IC$, where
\be \label{eq:dirichlet-conv}
\alpha:= {\rm inf}\left\{ \alpha \in \IR \, : \, F(s) \mbox{ converges absolutely for } \Re(s)>\alpha \right\} \, ,
\ee
then $F(s)$ defines a holomorphic function there and $\alpha$ is called \emph{abscissa of (absolute) convergence}. If the coefficients are multiplicative, \emph{i.e.}, $A_{mn}=A_m A_n$ whenever $m$, $n$ are coprime, then $F(s)$ satisfies the \emph{Euler product expansion}\footnote{If the coefficients are completely multiplicative, \emph{i.e.}, $A_{mn}=A_m A_n$ for all $m$, $n$, one has $F(s)= \prod_{p \in \IP} (1-A_p/p^s)^{-1}$ for $\Re(s)>\alpha$.}
\be \label{eq:euler}
F(s)= \prod_{p \in \IP} \sum_{e=0}^\infty\frac{A_{p^e}}{p^{e s}} \, , \quad \Re(s)>\alpha \, ,
\ee
where $\IP$ is the set of prime numbers.
A Dirichlet series that converges absolutely in a right half-plane and satisfies the Euler product in that region is called \emph{$L$-series}. 
When it exists, the meromorphic continuation of an $L$-series to a larger domain containing $\{\Re(s)<0\} \subset \IC$ defines an \emph{$L$-function}.
\end{definition}
If the Dirichlet series in Eq.~\eqref{eq:dirichlet-def} converges absolutely with abscissa $\alpha$, some moderate bounds on the growth of the coefficients and their partial sums as $m \to \infty$ follow. In particular, 
\be
 \forall\varepsilon>0 \, , \quad |A_m| = o (m^{\alpha+\varepsilon}) \, , \quad \sum_{m=1}^N |A_m| = O (N^{\alpha+\varepsilon}) \, .
\ee

We can now formally present and prove the relevant statements that serve as building blocks in the new paradigm of \emph{modular resurgence}.
The following result is a generalization of the exact large-order relations of~\cite[Sections~4.2.3 and~4.3.3]{Rella22}.
\begin{prop}\label{prop:stokes}
Let $\tfrakg(y)=\sum_{n=1}^\infty a_n y^n \in \IC [\![y]\!]$
be a Gevrey-1 asymptotic series such that
\be \label{eq: cn-Am}
a_n=\frac{\Gamma(n)}{2 \pi \ri} \sum_{m \in \IZ_{\ne 0}} \frac{S_m}{\zeta_m^n} \, , \quad n \in \IZ_{> 0} \, ,
\ee
where $\zeta_m$, $S_m$, $m \in \IZ_{\ne 0}$, are complex numbers satisfying $\sum_{m \in \IZ_{\ne 0}} |S_m\, \zeta_m^{-n}| < \infty$ for all $n\in \IZ_{>0}$.
Then, the Borel transform $\CB[\tfrakg] \in \IC\{\zeta\}$ is a resurgent function with only simple poles at the locations $\zeta_m$ and corresponding Stokes constants $S_m$. 
\end{prop}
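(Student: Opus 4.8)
The plan is to compute the Borel transform of $\tilde{f}$ directly from Eq.~\eqref{def:borel} and to exploit the decisive cancellation produced by the explicit coefficients in Eq.~\eqref{eq: cn-Am}. Since $\tilde{f}$ carries no constant term, its Borel transform is the genuine power series $\borel\tilde{f}(\zeta)=\sum_{n=1}^\infty \frac{c_n}{\Gamma(n)}\zeta^{n-1}$, obtained by applying $\borel[y^n]=\zeta^{n-1}/\Gamma(n)$ for $n\geq 1$. Substituting $c_n=\frac{\Gamma(n)}{2\pi\ri}\sum_{m\in\IZ_{\ne 0}}A_m\zeta_m^{-n}$, the gamma factors cancel identically, leaving a double series carrying no factorial weights. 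This cancellation is the heart of the statement: it is precisely what turns the factorially divergent germ into a convergent one near the origin.

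First I would establish convergence of the resulting germ in a neighbourhood of $\zeta=0$ and justify interchanging the two summations there. Inside the disk $|\zeta|<\min_m|\zeta_m|$, absolute convergence (inherited from the convergence assumed for the defining sum of $c_n$, which forces suitable decay of $A_m$ relative to $\zeta_m$) permits swapping the orders. Summing the inner geometric series $\sum_{n\geq 1}(\zeta/\zeta_m)^{n-1}=\zeta_m/(\zeta_m-\zeta)$ then collapses the expression to the closed form
\be
\borel\tilde{f}(\zeta)=\frac{1}{2\pi\ri}\sum_{m\in\IZ_{\ne 0}}\frac{A_m}{\zeta_m-\zeta}\, .
\ee
The right-hand side is manifestly meromorphic on $\IC$ and provides the analytic continuation of the germ to the whole Borel plane.

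From this closed form the conclusion follows by inspection. Near any fixed $\zeta=\zeta_{m_0}$, the single term $m=m_0$ contributes $-\frac{A_{m_0}}{2\pi\ri\,(\zeta-\zeta_{m_0})}$ while all remaining terms are regular at $\zeta_{m_0}$; comparison with Eq.~\eqref{eq: Stokes0} then identifies $\zeta_{m_0}$ as a simple pole with Stokes constant $A_{m_0}$, as claimed. Endless analytic continuability in the sense of Definition~\ref{def:resurgence} is immediate once one knows that $\{\zeta_m\}_{m\in\IZ_{\ne 0}}$ is locally finite---which holds in the intended setting, where the $\zeta_m$ form an infinite tower (or a pair of towers) receding to infinity along rays---so that every finite-length path can avoid a finite subset of poles.

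The main obstacle is the analytic, rather than formal, content of the middle step: making rigorous both the interchange of summations and, more importantly, the claim that the closed meromorphic form genuinely continues the origin germ across the poles. This requires controlling the tail of $\sum_m A_m/(\zeta_m-\zeta)$ uniformly on compact subsets of $\IC\setminus\{\zeta_m\}$, which in turn rests on a growth bound for $A_m$ together with a separation hypothesis on the locations $\zeta_m$. Under the standing assumption that the series defining $c_n$ in Eq.~\eqref{eq: cn-Am} converges, these conditions are available, and the remaining verifications are routine.
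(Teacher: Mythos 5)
Your proposal is correct and follows essentially the same route as the paper's own proof: cancel the gamma factors in the Borel transform, interchange the two sums using the Gevrey-1 bound, resum the geometric series to obtain $-\frac{1}{2\pi\ri}\sum_{m}\frac{A_m}{\zeta-\zeta_m}$, and read off the simple poles and Stokes constants by comparison with Eq.~\eqref{eq: Stokes0}. Your additional remarks on uniform control of the tail and local finiteness of $\{\zeta_m\}$ only make explicit what the paper leaves implicit.
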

\begin{proof}
By definition, the Borel transform of $\tfrakg$ is the series
\be \label{eq: borelftilde}
\CB [\tfrakg](\zeta)=\sum_{n=1}^\infty \frac{a_n}{\Gamma(n)}\zeta^{n-1}= \frac{1}{2 \pi \ri}\sum_{n=1}^\infty \sum_{m \in \IZ_{\ne 0}} \frac{S_m}{\zeta_m^n} \zeta^{n-1}  \in \IC\{\zeta \}  \, .
\ee
Since $\tfrakg$ is Gevrey-1, the coefficients $a_n$ have factorially divergent growth, \emph{i.e.},
\be
|a_n|\leq \Gamma(n) |\CA|^{-n+1} \, , \quad n \ge 1 \, ,
\ee
for some constant $\CA\in\IC$. Hence, the $n$-series in Eq.~\eqref{eq: borelftilde} converges absolutely for $|\zeta| < |\CA|$. Moreover, for a fixed value of $n$, the $m$-series is absolutely convergent by assumption.
It follows that we can exchange the order of summation and obtain
\be\label{eq: borelftilde2}
\CB [\tfrakg](\zeta)=\frac{1}{2 \pi \ri}\sum_{m \in \IZ_{\ne 0}} \frac{S_m}{\zeta_m} \sum_{n=1}^\infty \left(\frac{\zeta}{\zeta_m}\right)^{n-1}
=- \frac{1}{2 \pi \ri}\sum_{m \in \IZ_{\ne 0}} \frac{S_m}{\zeta-\zeta_m} \, , 
\ee
which is a meromorphic function with simple poles at $\zeta=\zeta_m$ and Stokes constants $S_m$. 
\end{proof}
Note that Eq.~\eqref{eq: cn-Am} can be equivalently written as
\be \label{eq: cn-Am2}
a_n=\frac{\Gamma(n)}{2 \pi \ri \, \CA^n} \sum_{m \in \IZ_{\ne 0}} \frac{S_m}{m^n} \, , \quad n \in \IZ_{>0} \, ,
\ee
when $\zeta_m = \CA m$, $m \in \IZ_{\ne 0}$, for some constant $\CA \in \IC$.  
Besides, the converse of Proposition~\ref{prop:stokes} holds.

\begin{prop} \label{prop:stokes2}
Let $\tfrakg(y)=\sum_{n=1}^\infty a_n y^n\in\IC[\![y]\!]$ be a Gevrey-1 asymptotic series such that the Borel transform $\CB[\tfrakg] \in \IC\{\zeta\}$ is a resurgent function with only simple poles at the locations $\zeta_m$ and corresponding Stokes constants $S_m$, where $\zeta_m$, $S_m$, $m \in \IZ_{\ne 0}$, are complex numbers satisfying $\sum_{m \in \IZ_{\ne 0}} |S_m\, \zeta_m^{-n}| < \infty$ for all $n\in \IZ_{>0}$.
Then, the coefficients $a_n$ are given by Eq.~\eqref{eq: cn-Am}.
\end{prop}
\begin{proof}
Since the Borel transform of $\tfrakg$ has only simple poles at $\zeta=\zeta_m$ with residues $-S_m/(2 \pi \ri)$ for $m \in \IZ_{\ne 0}$, and no additional holomorphic part, we can write it as
\be
\CB[\tfrakg](\zeta)=-\frac{1}{2\pi\ri}\sum_{m\in\IZ_{\neq 0}}\frac{S_m}{\zeta-\zeta_m} 
\, ,
\ee
which is absolutely convergent away from the poles by assumption.
For $|\zeta|<\mbox{inf}_m|\zeta_m|$, we can expand the geometric series on the RHS around $\zeta=0$ and exchange the summation order, yielding the $n$-series in Eq.~\eqref{eq: borelftilde}. The expected formula for $a_n$ then follows. 
\end{proof}

The exact coefficient expression in Eq.~\eqref{eq: cn-Am2} can be written in the form
\be \label{eq: cn-Am3}
a_n=\frac{\Gamma(n)}{2 \pi \ri \, \CA^n} \left( L_+(n) - (-1)^n L_-(n) \right) \, , \quad n \in \IZ_{>0} \, ,
\ee
where we have introduced the Dirichlet series
\be \label{eq:lseries-pm}
L_{+}(s)=\sum_{m >0} \frac{S_{m}}{m^s} \,, \quad L_{-}(s)=-\sum_{m >0} \frac{S_{- m}}{m^s}
\ee
for $s \in \IC$.
Thus, for each positive integer $n$, the coefficient $a_n$ is obtained by evaluating the Dirichlet series of the Stokes constants at $s=n$, up to simple prefactors. 
Recall that Eq.~\eqref{eq: cn-Am3} requires the convergence condition included in Propositions~\ref{prop:stokes} and~\ref{prop:stokes2}, that is, $\sum_{m \in \IZ_{\ne 0}} |S_m\, m^{-n}| < \infty$, which holds, for instance, when the Dirichlet series in Eq.~\eqref{eq:lseries-pm} have abscissa $\alpha < 1$. If $\alpha=1$, then either $\Re(s)=\alpha$ is included in the domain of absolute convergence of $L_\pm(s)$, or the formula in Eq.~\eqref{eq: cn-Am3} diverges for $n=1$ and therefore only applies to $n\ge 2$. More generally, the exact large-order relation in Eq.~\eqref{eq: cn-Am3} gives a finite answer for $a_n$ when $n>\alpha$ and diverges for $n<\alpha$.
Whenever the Dirichlet series above have a right half-plane of absolute convergence, an additional result can be proven.
\begin{prop}\label{prop:inverse Mellin}
Let $\{S_m\}$, $m \in \IZ_{\ne 0}$, be a sequence of complex numbers such that the Dirichlet series $L_\pm (s)$ in Eq.~\eqref{eq:lseries-pm} converge absolutely in $\{\Re (s)>\alpha \}\subset\IC$.
Let $q=\re^{2 \pi \ri y}$ and 
\begin{equation}\label{eq:f_Am}
\frakf(y)=\begin{cases}
\displaystyle\sum_{m>0}S_m q^m & \mbox{if} \quad \Im(y)>0  \\
& \\
-\displaystyle\sum_{m<0}S_m q^m & \mbox{if} \quad \Im(y)<0 
\end{cases}
\end{equation} 
be the generating series of the complex numbers $S_m$. We denote by $\frakf |_{\IH_\pm}$ the restriction of $\frakf$ to $\IH_\pm$, respectively. Then, $\frakf(y)$ is holomorphic in $\IC\setminus\IR$, and there is a canonical correspondence between the pair of Dirichlet series $L_\pm$ and the pair of $q$-series $\frakf |_{\IH_\pm}$ via Mellin transform and its inverse. 
\end{prop}
\begin{proof}
Let us denote $\alpha':=\mbox{max}(0, \alpha)$.
A simple computation shows that
\be \label{eq: mellin-formula}
    (2\pi)^{-s}\Gamma(s)L_{\pm}(s)= \int_0^\infty t^{s-1}\, \frakf(\pm \ri t) \, dt \, , \quad \Re(s)>\alpha' \, ,
\ee
whose RHS is the Mellin transform of the $q$-series $\frakf|_{\IH_\pm}$ after a change of variable.
Conversely, by the Mellin inversion theorem, we have that\footnote{The properties of the Mellin transform constrain the asymptotic behavior of the $q$-series $\frakf$. Specifically, we have that $\frakf(\pm \ri t)=O(t^{-\alpha'})$ for $t \to 0^+$ and $\frakf(\pm \ri t)=O(t^{-\infty})$ for $t \to +\infty$.}
\begin{equation} \label{eq: invmellin-formula}
\frakf(\pm \ri t)= \frac{1}{2\pi \ri}\displaystyle \int_{C-\ri\infty}^{C+\ri \infty} (2\pi t)^{-s} \Gamma(s)\, L_{\pm}(s)\, ds   \, , \quad t >0 \, , 
\end{equation}
that is, $\frakf|_{\IH_\pm}$ can be recovered via the inverse Mellin transform of the functions in Eq.~\eqref{eq: mellin-formula} for a fixed real number $C>\alpha'$. 
Finally, Eq.~\eqref{eq: invmellin-formula} can be analytically continued to $\frakf(y)$, $y \in \IC\setminus\IR$. 
\end{proof}

Note that Proposition~\ref{prop:inverse Mellin} applies when the Dirichlet series in Eq.~\eqref{eq:lseries-pm} are $L$-series. 
We can equivalently consider another pair of $L$-series, that is, 
\be
L_0(s)=\sum_{m \in \IZ_{\ne 0}} \frac{S_m}{|m|^s} \,, \quad L_1(s)=\sum_{m \in \IZ_{\ne 0}} \mathrm{sgn}(m) \frac{S_m}{|m|^s}\, ,
\ee
which are again in a one-to-one correspondence with the generating function $\frakf(y)$ in Eq.~\eqref{eq:f_Am}.
Indeed, $L_0$, $L_1$ can be written as the linear combinations
\be
\begin{aligned}
L_0(s)&=\sum_{m>0}\frac{S_m}{m^s}+\sum_{m<0}\frac{S_m}{(-m)^s}=L_{+}(s)-L_{-}(s) \, , \\
L_1(s)&=\sum_{m>0}\frac{S_m}{m^s}-\sum_{m<0}\frac{S_m}{(-m)^s}=L_{+}(s)+L_{-}(s)\,.
\end{aligned}
\ee
We can then construct the meromorphic extension of these $L$-series, or the $L$-series in Eq.~\eqref{eq:lseries-pm}, that is, the corresponding $L$-functions.
The following result states that the perturbative coefficients of the asymptotic series obtained by expanding the $q$-series $\frakf(y)$ for $y \to 0$ with $\Im(y)>0$ can be written in closed form in terms of the analytic continuation of the $L$-series $L_+(s)$. An analogous statement holds for $\Im(y)<0$.
\begin{prop}\label{prop:L-series}
Let $\{S_m\}$, $m \in \IZ_{\ne 0}$, be a sequence of complex numbers such that the Dirichlet series $L_\pm (s)$ in Eq.~\eqref{eq:lseries-pm} converge absolutely in $\{\Re (s)>\alpha\}\subset\IC$ 
and can be meromorphically continued to $\{\Re(s)< 0\}\subset\IC$ through a functional equation. 
Let 
\be
\tfrakf_\pm(y)=\sum_{n=0}^\infty b^{(\pm)}_n y^n \in \IC [\![y]\!]
\ee
be the asymptotic series\footnote{Notice that the full asymptotic expansion contains an additional Riemann integral term, which we do not consider here. See~\cite[Section~6.7 by Zagier]{zagier-appendix}.} of the generating function $\frakf(y)$, defined in Eq.~\eqref{eq:f_Am}, as $y\to 0$ with $\Im(y)>0$ and $\Im(y)<0$, respectively. Then, the perturbative coefficients $b^{(\pm)}_n$ are given by 
\be \label{eq: coeff-l1}
b^{(\pm)}_n=L_\pm(-n)\frac{(2\pi \ri)^n}{n!}\, , \quad n \in \IZ_{\ge 0} \, ,
\ee
where $L_\pm(-n)$ is defined via the functional equation.
\end{prop}
\begin{proof}
Let us take $\Im(y)>0$ and consider the $q$-series in Eq.~\eqref{eq:f_Am}. Computing the formal power series expansion of $q^m=\re^{2\pi \ri m y}$ for $y\to 0$ and $m \in \IZ_{>0}$ fixed yields
\be \label{eq:prop-f-exp}
\frakf(y) = \sum_{m=1}^{\infty} S_m \re^{2\pi \ri m y} \sim \sum_{m=1}^{\infty} \sum_{n=0}^{\infty} S_m \frac{(2\pi \ri)^n}{n!} m^n y^n \, .
\ee
Formally exchanging the order of summation, we obtain that
\be \label{eq:prop-f-exp2}
\tfrakf_+(y) = \sum_{n=0}^{\infty} \sum_{m=1}^{\infty} \frac{S_m}{m^{-n}} \frac{(2\pi \ri)^n}{n!} y^n = \sum_{n=0}^{\infty} L_+(-n) \frac{(2\pi \ri)^n}{n!} y^n \, ,
\ee 
where we make sense of $L_+(-n)$ through the meromorphic continuation of $L_+(s)$. The case of $\Im(y)<0$ is entirely analogous.
\end{proof}

Note that Proposition~\ref{prop:L-series} applies when the Dirichlet series in Eq.~\eqref{eq:lseries-pm} are $L$-functions. 
Moreover, depending on the explicit form of the functional equation satisfied by $L_\pm(s)$, it might happen that $L_\pm(-n)$ is not well-defined for some $n\in\IZ_{\ge 0}$. In this case, the computation of the corresponding perturbative coefficients $b_n^{(\pm)}$ through the formula in Eq.~\eqref{eq: coeff-l1} requires a separate ad-hoc analysis. 

\subsubsection*{The paradigm}

We can now apply the statements above to build the modular resurgence paradigm. To do so, we assemble the key ingredients in this construction and the net of relations among them along the same lines of~\cite[Section~3]{FR1phys}. Let us give a step-by-step overview below.
\begin{itemize}
\item Let $\tfrakg \in \IC [\![y]\!]$ be a Gevrey-1 formal power series whose perturbative coefficients can be written in the form of Eq.~\eqref{eq: cn-Am} with $\zeta_m=\CA m$, $m\in\IZ_{\ne 0}$, for some constant $\CA\in\IC$, and $S_m\in\IC$ such that $\sum_{m \in \IZ_{\ne 0}} |S_m m^{-n} |< \infty$ for all $n\in \IZ_{>0}$. By Proposition~\ref{prop:stokes}, $\tfrakg(y)$ is a simple resurgent series whose Borel transform has a single tower of simple poles at the locations $\zeta_m$ with Stokes constants $S_m$.
\item Assume that the complex numbers $S_m$, $m \in \IZ_{\ne 0}$, are the coefficients of two $L$-series $L_\pm(s)$, where $\{\Re(s)>\alpha\ge 0 \} \subset \IC$, defined as in Eq.~\eqref{eq:lseries-pm}. Following Proposition~\ref{prop:inverse Mellin}, we define their generating function $\frakf(y)$, $y \in \IC \setminus\IR$, in Eq.~\eqref{eq:f_Am}, by taking the inverse Mellin transform of $L_\pm(s)$ as in Eq.~\eqref{eq: invmellin-formula}. 
\item Assume that $L_\pm(s)$ can be meromorphically continued to $\{\Re(s)< 0\}\subset\IC$, \emph{i.e.}, they are $L$-functions. Denote by $\tfrakf_\pm \in \IC [\![y]\!]$ the asymptotic series of $\frakf(y)$ as $y\to 0$ with $\Im(y)>0$ and $\Im(y)<0$, respectively. Following Proposition~\ref{prop:L-series}, the perturbative coefficients of $\tfrakf_\pm(y)$ are explicitly given by the values of the $L$-functions $L_\pm(s)$ at negative integers as in Eq.~\eqref{eq: coeff-l1}, respectively. Note that $L_\pm(-n)$ for $n\in\IZ_{>0}$ are computed via the functional equation that gives the analytic continuation of the corresponding $L$-series. Sometimes, the analytic continuation of an $L$-function is described in terms of a different $L$-function with different coefficients. Assume this is the case and denote the new coefficients by $R_m \in \IC$, $m\in\IZ_{\neq 0}$. 
\item Crucially, the resurgent structures of $\tfrakf_\pm(y)$ are fully determined by the functional equation satisfied by $L_\pm(s)$, respectively. Assume that the perturbative coefficients of 
\be
\tfrakf(y)=\tfrakf_+(y)-\tfrakf_-(-y)
\ee
can be written in terms of $\eta_m$ and $R_m$ in the form of Eq.~\eqref{eq: cn-Am}, where $\eta_m=\CB m$, $m \in \IZ_{\ne 0}$, for some constant $\CB\in\IC$, while also $\sum_{m \in \IZ_{\ne 0}} |R_m m^{-n}| < \infty$ for all $n\in \IZ_{>0}$. By Proposition~\ref{prop:stokes}, $\tfrakf(y)$ is a simple resurgent series whose Borel transform has a single tower of simple poles at the locations $\eta_m$ with Stokes constants $R_m$.
\item Let $L_\pm'(s)$, where $\{\Re(s)>\beta\ge 0\} \subset \IC$, be the two $L$-series, defined as in Eq.~\eqref{eq:lseries-pm}, with coefficients $R_m$, $m \in \IZ_{\ne 0}$. Following Proposition~\ref{prop:inverse Mellin}, we can build their generating function $\frakg(y)$, $y \in \IC \setminus\IR$, defined as in Eq.~\eqref{eq:f_Am}, by taking the inverse Mellin transform of $L_\pm'(s)$ as in Eq.~\eqref{eq: invmellin-formula}.
\item Finally, $L_\pm'(s)$ are $L$-functions by definition, and therefore they can be meromorphically continued to $\{\Re(s)< 0\}\subset\IC$. Denote by $\tfrakg_\pm \in \IC [\![y]\!]$ the asymptotic series of $\frakg(y)$ as $y\to 0$ with $\Im(y)>0$ and $\Im(y)<0$, respectively. Again, following Proposition~\ref{prop:L-series}, the perturbative coefficients of $\tfrakg_\pm(y)$ are explicitly given by the values of the $L$-functions $L_\pm'(s)$ at negative integers as in Eq.~\eqref{eq: coeff-l1}, respectively. Once more, $L_\pm'(-n)$ for $n\in\IZ_{>0}$ are computed by means of the functional equation that gives the analytic continuation of the corresponding $L$-series. By construction, these are determined by $L_\pm(s)$. We recover the original formal power series as
\be
\tfrakg(y)=\tfrakg_+(y)-\tfrakg_-(-y) \, .
\ee
\end{itemize}   

Notice that the above relations among the dual resurgent structures of the pair of asymptotic series $\tfrakg$, $\tfrakf$ fit into a unique global construction that generalizes the strong-weak symmetry of~\cite{FR1phys}.
For instance, the $L$-functions $L_\pm$, $L_\pm'$ are called \emph{resurgent $L$-functions} in~\cite[Section~3]{FR1phys}.
We summarize the previous circular argument in the commutative diagram below. 
\begin{equation}\label{diag:resurgence-L funct}
\begin{tikzcd}[column sep=2.4em, row sep=2.8em]
       L_\pm'(s) \arrow[rrr,"\text{inverse Mellin}"]\arrow[ddrrrrrrrr,sloped,"\text{functional equation}"] & & & \mathfrak{g}(y) \arrow[rr,"y \rightarrow 0"] &  & \tilde{\mathfrak{g}}(y) \arrow[rrr,"\text{resurgence}"] & & & \{ S_m \}\arrow[dd,sloped,"\text{$L$-function}"] \\  \\
       \{ R_m \} \arrow[uu,sloped,"\text{$L$-function}"] & & &\tilde{\mathfrak{f}}(y) \arrow[lll,"\text{resurgence}"] 
       &  & \mathfrak{f}(y)  \arrow[ll,"y \rightarrow 0"] & & & L_\pm(s) \arrow[lll,"\text{inverse Mellin}"]\arrow[uullllllll,sloped,swap,"\text{}"]
\end{tikzcd}
\end{equation}
Here, a special place is occupied by the two-headed diagonal arrow representing the functional equation connecting the two pairs of resurgent $L$-functions. 
Moreover, the $q$-series $\frakg$, $\frakf$ are such that one equals the discontinuity of the asymptotic expansion of the other in the appropriate variables.

\begin{cor} \label{cor:disc-exchange}
Let $\tfrakg \in \IC [\![y]\!]$ be a Gevrey-1 asymptotic series whose Borel transform has a tower of simple poles at the locations $\zeta_m=\CA m$, $m\in\IZ_{\ne 0}$, for some constant $\CA\in\IC$, and Stokes constants $S_m\in\IC$.
Let $\frakf(y)$ be the generating series of the Stokes constants, as in Eq.~\eqref{eq:f_Am}.
Then, the discontinuities of $\tfrakg$ across its Stokes rays can be expressed as
\begin{subequations}
\begin{align}
\mathrm{disc}_{\theta} [\tfrakg](y) &= \sum_{m=1}^{\infty} S_m \, \re^{-\CA m /y} = \frakf\left(-\tfrac{\CA}{2 \pi \ri y}\right)   \, , \quad y\in\IH_+ \cap \{\Re ( \re^{-\ri\theta} y)>0\} \, ,   \label{eq: disc-up}\\
\mathrm{disc}_{\theta+\pi} [\tfrakg ] (y) &=\sum_{m=1}^{\infty} S_{-m} \, \re^{\CA m /y} = -\frakf\left(-\tfrac{\CA}{2 \pi \ri y}\right)   \, , \quad y\in\IH_- \cap \{\Re ( \re^{-\ri\theta} y)>0\} \, , \label{eq: disc-down}
\end{align}
\end{subequations}
where $0 \le \theta < \pi$ is the argument of the singularities in the upper half of the Borel plane.
\end{cor}
\begin{proof}
The statement follows from Eqs.~\eqref{eq: Stokes1-poles} and~\eqref{eq:f_Am}.
\end{proof}

\begin{rmk}
    From the conventional point of view in resurgence, considering the generating series of the Stokes constants is rather unusual, as one would not generally expect them to define a function. However, we argue here that when the resurgent structure of an asymptotic series is of a particularly simple type, while its Stokes constants have special analytic and number-theoretic properties, studying the generating series of the Stokes constants allows us to construct a new asymptotic series whose resurgence is deeply related to that of the original one. This is the domain of application of modular resurgence, as described above. We are therefore prompted to advocate a novel perspective on resurgence that investigates the properties of the Stokes constants in their own right.
\end{rmk}

\subsection{Modular resurgent structures} \label{sec: modular-structures}

Our previous discussion highlights the leading role of the $L$-functions and their functional equation in describing new features of a particular type of resurgent asymptotic series. In this section, we address the relationship between quantum modularity and resurgent structures, which will clarify our choice of name for the paradigm of modular resurgence. 
We start by proposing the following definitions of modular resurgent structure and modular resurgent series (MRS).
\begin{definition}\label{def:modular_res_struct}
A Gevrey-1 asymptotic series $\tfrakg \in \IC \llbracket y\rrbracket$ has a \emph{modular resurgent structure} if the following conditions hold.
\begin{enumerate}
    \item The Borel transform $\CB[\tfrakg] \in \IC\{\zeta\}$ has a tower of singularities at the locations $\zeta_m= \CA m$, $m\in\mathbb{Z}_{\ne 0}$, for some constant $\CA \in \IC$, in the complex $\zeta$-plane.
    \item For every $m\in\IZ_{\ne 0}$, the resurgent series at the singularity $\zeta_m$ is the constant function $S_m\in\IC$, \emph{i.e.}, the Stokes constant.
    \item The Stokes constants $S_m$, $m\in\IZ_{\ne 0}$, are the coefficients of two $L$-functions
    \be
		L_{+}(s)=\sum_{m >0} \frac{S_{m}}{m^s} \,, \quad L_{-}(s)=-\sum_{m >0} \frac{S_{- m}}{m^s} \, .
     \ee
\end{enumerate}
A Gevrey-1 asymptotic series with a modular resurgent structure is a \emph{modular resurgent series}. 
\end{definition}

Note that in the definition of a modular resurgent structure, we have left unspecified the type of singularities appearing in the Borel plane. However, their nature directly affects the expression of the coefficients of the asymptotic series $\tfrakg(y)$. For instance, when all singularities in the tower are simple poles, as in Propositions~\ref{prop:stokes} and~\ref{prop:stokes2}, the perturbative coefficients depend on the Stokes data via the exact large-order relations in Eq.~\eqref{eq: cn-Am}. Recall that this assumption is made throughout Section~\ref{sec:new-resur-Lfunct}. In fact, the modular resurgence paradigm, as discussed before, applies to a \emph{canonical} pair of MRSs $\tfrakg(y)$ and $\tfrakf(y)$ whose Borel transforms possess a tower of simple poles and such that one reproduces the asymptotic expansion of the discontinuity of the other (see Corollary~\ref{cor:disc-exchange}). The paradigm thus formalizes an exchange of perturbative and non-perturbative information between opposite asymptotic regimes in the same spirit as strong-weak dualities in physical theories. Notice also that not all MRSs fulfill the criteria for the paradigm to apply.

\subsubsection*{The conjectures}

Let us now state our main conjectures. Specifically, a $q$-series with modular resurgent asymptotics is expected to be a holomorphic quantum modular form and, if it is the inverse Mellin transform of an $L$-function, to agree with the median resummation of its asymptotic expansion.

\begin{conjecture}\label{conj:quantum_modular1}
Let $\frakg\colon\IH\to\IC$ be a $q$-series where $q=\re^{2 \pi \ri y}$. If its Mellin transform is an $L$-function and its asymptotic expansion $\tfrakg(y)$ as $y\to 0$ with $\Im(y)>0$ has a modular resurgent structure, then the \emph{median resummation} of $\tfrakg(y)$ reconstructs the original function $\frakg(y)$, that is, 
\be
\mathcal{S}_\theta^{\mathrm{med}}[\tfrakg](y)=\frakg(y) \, , \quad y\in\IH \cap \{\Re ( \re^{-\ri\theta} y)>0\} \, ,
\ee
where $\theta$ is the argument of the singularities in the Borel plane. 
\end{conjecture}
\begin{conjecture}\label{conj:quantum_modular2}
Let $\frakg\colon\IH\to\IC$ be a $q$-series where $q=\re^{2 \pi \ri y}$. If its asymptotic expansion $\tfrakg(y)$ as $y\to 0$ with $\Im(y)>0$ has a modular resurgent structure, then the function $\frakg(y)$ is a \emph{holomorphic quantum modular form} for a subgroup $\Gamma\subseteq\mathsf{SL}_2(\mathbb{Z})$.
\end{conjecture}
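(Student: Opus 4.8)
The plan is to reduce the statement to the action of the generators $T=\left(\begin{smallmatrix}1&1\\0&1\end{smallmatrix}\right)$ and $S=\left(\begin{smallmatrix}0&-1\\1&0\end{smallmatrix}\right)$ of $\mathsf{SL}_2(\IZ)$ and then to exhibit the $S$-cocycle as a convergent integral that is manifestly holomorphic off a single cut. The cocycle relation $h_{\gamma_1\gamma_2}[f]=h_{\gamma_1}[f]\big|_{\gamma_2}+h_{\gamma_2}[f]$ shows that, once the holomorphic-extension property of Definition~\ref{def: holoQM} is checked on $T$ and $S$, it propagates to every $\gamma$ in the group they generate, the only care being the bookkeeping of the compatible domains $\IC_\gamma$. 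For $T$ the cocycle vanishes identically, $h_T[f](y)=f(y+1)-f(y)=0$, since $f$ is a $q$-series with $q=\re^{2\pi\ri y}$; the whole content therefore lies in the $S$-transformation, for which $\IC_S=\IC'=\IC\setminus\IR_{\le 0}$.

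First I would normalise the tower as $\CE=2\pi\ri$, as in the motivating examples, so that the resurgent discontinuity of Eq.~\eqref{eq: disc-up} becomes $\mathrm{disc}_\theta\tilde f_+(y)=f(-1/y)$: the Stokes discontinuity is precisely the $S$-transform of the generating function. Feeding this into the median-resummation identity Eq.~\eqref{eq: median2} and invoking Conjecture~\ref{conj:quantum_modular1}, so that $f(y)=s_{\theta_-}(\tilde f)(y)+\tfrac12\,\mathrm{disc}_\theta\tilde f(y)$, expresses the $S$-transform $f(-1/y)$ through $f(y)$ and the lateral Borel--Laplace sums $s_{\theta_\pm}(\tilde f)$, out of which the cocycle $h_S[f](y)=y^{-\omega}f(-1/y)-f(y)$ is assembled. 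The weight $\omega$ and the automorphy factor $y^{-\omega}$ are forced by the functional equation of the $L$-functions $L_\pm(s)$ attached to the Stokes constants: the two-headed diagonal arrow of diagram~\eqref{diag:resurgence-L funct} trades $L_{0,\pm}(s)$ for $L_{\infty,\pm}(s)$ under a reflection $s\mapsto\omega-s$, and through Mellin inversion (Proposition~\ref{prop:inverse Mellin}) this reflection is exactly what converts $f$ at $y$ into its dual at $-1/y$ up to the factor $y^{-\omega}$, so that $\omega$ and the relevant subgroup $\Gamma$ can be read off from its Gamma-factors and level.

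The decisive step is holomorphy of $h_S[f]$ on $\IC'$, and here I would use condition~(1) of Definition~\ref{def:modular_res_struct} crucially: all singularities of the modular resurgent function $\CB[\tilde f]$ lie on the single ray through $\CE m$, so the Borel plane carries only the two Stokes rays at $\theta$ and $\theta+\pi$. The Laplace contour of Eq.~\eqref{eq: Laplace} can therefore be rotated freely across the entire complementary sector, and the lateral sum $s_{\theta_\pm}(\tilde f)(y)$---hence the cocycle---continues analytically over the whole cut plane, the cut sitting along the Stokes direction. Equivalently, I would realise the cocycle as a period-type integral $h_S[f](y)=\int_{S^{-1}(\infty)}^{\infty}g(t)\,(y-t)^{w}\,dt$ of the rapidly decaying cusp form $g$ whose Fourier coefficients are the $A_m$, with $w$ the exponent fixed by the weight, the decay guaranteeing convergence and the fixed integration ray guaranteeing holomorphy off $\IR_{\le 0}$, exactly as in Eqs.~\eqref{eq:cocycle-eta} and~\eqref{eq:cocycle-Delta}.

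The main obstacle is twofold, and it is what forces the statement to remain a conjecture. First, converting resurgent data into an identity for the cocycle rests entirely on Conjecture~\ref{conj:quantum_modular1}; without the median resummation reconstructing $f$ one has no bridge from the Borel--Laplace sums to $f$ itself, so a genuinely general proof would have to establish both conjectures simultaneously. Second, pinning down the correct weight $\omega$ and guaranteeing that every value $L_\pm(-n)$ entering Eq.~\eqref{eq: coeff-l1} is well-defined depends delicately on the central point, the Gamma-factors, and the motivic weight of the functional equation---precisely the case-by-case analysis flagged in the remark after Proposition~\ref{prop:L-series}. I would therefore not aim at full generality but carry the scheme through in the two families where the functional equation is explicit: Maass cusp forms, where the $S$-cocycle is the Lewis--Zagier period function solving the three-term functional equation and holomorphic on $\IC'$ (Theorems~\ref{thm:lewis-Zagier_median} and~\ref{thm:lewis-Zagier_QM}), and holomorphic cusp forms of integral weight, where the cocycle is the Eichler period integral and Eichler--Shimura theory supplies the analyticity (Theorem~\ref{thm:modular}).
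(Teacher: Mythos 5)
The statement you are asked about is a \emph{conjecture}: the paper offers no general proof, only verifications in special families (local $\IP^2$ via the companion paper, Maass cusp forms in Theorem~\ref{thm:lewis-Zagier_QM}, and general cusp forms in Theorem~\ref{thm:modular}). You correctly recognise this and, for the cases that can actually be handled, your reduction to the generators $T$ and $S$, the vanishing of $h_T[f]$ by $q$-periodicity, and the identification of $h_S[f]$ with a period-type object are exactly the paper's mechanism in Theorems~\ref{thm:lewis-Zagier_QM} and~\ref{thm:modular}. So at the level of what is provable, your route and the paper's coincide.

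Two substantive differences in your proposed general strategy are worth flagging. First, you make Conjecture~\ref{conj:quantum_modular2} logically dependent on Conjecture~\ref{conj:quantum_modular1}, routing $f(-1/y)$ through the median-resummation identity. The paper keeps the two conjectures independent: Theorem~\ref{thm:lewis-Zagier_QM} is proved directly from the Lewis--Zagier correspondence (the period function $\psi$ with $c\,\psi(y)=f(y)-\tfrac{1}{y}f(-1/y)$ analytic on $\IC'$), with no input from Borel--Laplace sums, and indeed the median-resummation statement (Theorem~\ref{thm:lewis-Zagier_median}) requires the extra parity hypothesis $A_m=A_{-m}$ that the quantum-modularity statement does not. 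The relation you want to exploit --- cocycle equals a combination of lateral Borel sums --- appears in the paper only as an \emph{a posteriori} observation (the remark after Theorem~\ref{thm:lewis-Zagier_median}), expected but not established in general. Second, your holomorphy argument by rotating the Laplace contour across the singularity-free sector yields analyticity of the lateral sums on the plane cut along the Stokes direction (here $\ri\IR$, since $\CE=2\pi\ri$), which is not the domain $\IC_S=\IC\setminus\IR_{\le 0}$ demanded by Definition~\ref{def: holoQM}; passing from one cut plane to the other is precisely the nontrivial analytic continuation that the paper imports wholesale from Lewis--Zagier theory (or, in Theorem~\ref{thm:modular}, from the explicit $\mathbf{e}_1$-representation of the cocycle) rather than deriving from resurgence. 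Your closing paragraph correctly identifies both obstacles, so the proposal is an honest strategy sketch rather than a proof --- which is all the paper achieves as well.
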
 

We can attempt to change the assumptions in Conjectures~\ref{conj:quantum_modular1} and~\ref{conj:quantum_modular2} along the same lines as in Zagier's strange identity~\cite{zagier2001vassiliev} and consider functions that are defined only over the rationals. This is, in fact, the setting where quantum modular forms were originally introduced~\cite{zagier_modular}, as briefly discussed in Section~\ref{sec:modularity}. Conjectures~\ref{conj:quantum_modular1} and~\ref{conj:quantum_modular2} then become the following.
\begin{conjecture}\label{conj:quantum_modular1-Q}
Let $\frakg\colon\IQ\to\IC$ be a $q$-series where $q=\re^{2 \pi \ri y}$. If its Mellin transform is an $L$-function and its asymptotic expansion $\tfrakg(y)$ as $y\to \ri 0$
extends to $y \in \IC$ and has a modular resurgent structure, then the \emph{median resummation} of $\tfrakg(y)$ reconstructs the original function $\frakg(y)$, that is,
\be \label{eq:median-Q}
\mathcal{S}_\theta^{\mathrm{med}}[\tfrakg](y)=\frakg(y) \, , \quad y\in\IQ \cap \{\Re ( \re^{-\ri\theta} y )>0\} \, , 
\ee
where $\theta$ is the argument of the singularities in the Borel plane. 
\end{conjecture}
\begin{conjecture}\label{conj:quantum_modular2-Q}
Let $\frakg\colon\IQ\to\IC$ be a $q$-series where $q=\re^{2 \pi \ri y}$. If its asymptotic expansion $\tfrakg(y)$ as $y\to \ri 0$ extends to $y \in \IC$ and has a modular resurgent structure, then the function $\frakg(y)$ is a \emph{quantum modular form} for a subgroup $\Gamma\subseteq\mathsf{SL}_2(\mathbb{Z})$.
\end{conjecture}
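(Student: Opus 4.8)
The plan is to realize the cocycle $h_\gamma[f]$ of Eq.~\eqref{cocycle} as a period integral of a genuine modular form, exactly as in Zagier's trefoil cocycle of Eq.~\eqref{eq:cocycle-eta} and the Eichler integral of Eq.~\eqref{eq:cocycle-Delta}. Because $f$ is a $q$-series it satisfies $f(y+1)=f(y)$, so $h_T[f]\equiv 0$ for $T\colon y\mapsto y+1$; by the standard additivity of group cocycles it then suffices to control $h_\gamma[f]$ on the remaining generators of $\Gamma$, the decisive one being the inversion $S\colon y\mapsto -1/y$. For $\gamma=S$ the cocycle reads $h_S[f](y)=y^{-\omega}f(-1/y)-f(y)$, so the whole difficulty is concentrated in comparing $f(-1/y)$ with $f(y)$.

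First I would pass to the resurgent $L$-functions. By Propositions~\ref{prop:inverse Mellin} and~\ref{prop:L-series}, $f$ and the pair $L_\pm(s)$ are inverse Mellin transforms of one another, so the analytic continuation of $L_\pm(s)$ across $\{\Re(s)=0\}$ required by Definition~\ref{def:modular_res_struct} is carried by a functional equation---precisely the two-headed diagonal arrow in the diagram of Eq.~\eqref{diag:resurgence-L funct} linking $L_{0,\pm}(s)$ to the dual $L_{\infty,\pm}(s)$ with coefficients $R_m$. Invoking Hecke's correspondence and the Weil converse theorem, a functional equation of automorphic shape is equivalent, under inverse Mellin transform, to an $S$-transformation law that expresses $y^{-\omega}f(-1/y)$ as a main term built from the dual generating function plus a correction. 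That correction is the cocycle, and it takes the Eichler form
\be
h_S[f](y)=\int_{0}^{\ri\infty}\psi(t)\,K_\omega(y,t)\,dt\,,
\ee
where $\psi$ is the shadow, \emph{i.e.}, the modular form whose Fourier coefficients are the dual Stokes data $R_m$, and $K_\omega$ is the appropriate period kernel, in direct analogy with Eqs.~\eqref{eq:cocycle-eta} and~\eqref{eq:cocycle-Delta}.

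Second, I would match this correction with the discontinuity data already in hand. By Eqs.~\eqref{eq: disc-up}--\eqref{eq: disc-down} the jump of the median resummation across the Stokes ray equals $\pm f(-\CE/(2\pi\ri y))$, that is, the generating function evaluated at the inversion $y\mapsto -\CE/(2\pi\ri y)$---which is the $S$-transformation up to the normalization fixed by $\CE$. Granting the companion reconstruction statement (the $\IQ$-version of Conjecture~\ref{conj:quantum_modular1}, here Conjecture~\ref{conj:quantum_modular1-Q}), Eq.~\eqref{eq: median2} identifies $f$ with a lateral Borel--Laplace sum plus one-half of this discontinuity, so $h_\gamma[f]$ becomes one-half of a convergent exponential sum. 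Writing that half-discontinuity as a Mellin--Barnes integral and deforming the contour past $\Re(s)=0$ collects the residues of $\Gamma(s)L_\pm(s)$ and exhibits $h_\gamma[f]$ as a function that is real analytic on $\IR\setminus\{\gamma^{-1}(\infty)\}$ (or holomorphic on $\IC_\gamma$ in the holomorphic variant of Definition~\ref{def: holoQM}), hence strictly better behaved than $f$. This is the quantum modularity asserted in Section~\ref{sec:modularity}.

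The hard part will be bridging the gap between the abstract hypotheses of a modular resurgent structure and the rigidity the converse theorem demands. Definition~\ref{def:modular_res_struct} only requires that $L_\pm(s)$ continue meromorphically to $\{\Re(s)<0\}$; it does not prescribe a functional equation of the exact automorphic type---matching $\Gamma$-factors, conductor, and motivic weight---needed to force invariance under a congruence subgroup. Upgrading ``continuation with a functional equation'' to ``genuine $\mathsf{SL}_2(\IZ)$-transformation of the shadow $\psi$'' is exactly what keeps the statement conjectural, and the circularity of the diagram in Eq.~\eqref{diag:resurgence-L funct}---where $\psi$ must itself be modular for the period integral to satisfy the cocycle relation on all of $\Gamma$---shows that the two halves have to be bootstrapped together. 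Accordingly I would not attempt the fully general case but would verify the mechanism on the families where the dual $L$-functions are provably automorphic: the Maass-cusp-form examples governed by Theorems~\ref{thm:lewis-Zagier_median} and~\ref{thm:lewis-Zagier_QM} and the cusp-form construction of Theorem~\ref{thm:modular}, which furnish the evidence on which the conjecture rests.
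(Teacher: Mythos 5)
This statement is one of the paper's open conjectures: the paper gives no proof of Conjecture~\ref{conj:quantum_modular2-Q}, only supporting evidence (the Kontsevich--Zagier series, the $\hat{Z}$ invariants of plumbed 3-manifolds, and Zagier's $\sigma,\sigma^*$), together with proofs of the analogous upper-half-plane statements for special families in Theorems~\ref{thm:lewis-Zagier_QM} and~\ref{thm:modular}. Your submission is, by your own admission, a strategy sketch rather than a proof, so there is no complete argument on either side to compare. Judged as a strategy, it is faithful to the paper's program: the identification of the $S$-cocycle with the generating function of the dual Stokes data via the functional equation of the resurgent $L$-functions is exactly the mechanism of the diagram in Eq.~\eqref{diag:resurgence-L funct}, and your Eichler-type formula for $h_S[f]$ is precisely how the paper's Theorem~\ref{thm:lewis-Zagier_QM} works, where the cocycle is the Lewis--Zagier period $\psi$ and Theorem~\ref{thm:lewis-Zagier_median} then identifies the lateral Borel--Laplace sums with $\psi_1(-1/y)$. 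Your diagnosis of the obstruction---that Definition~\ref{def:modular_res_struct} demands only analytic continuation of $L_\pm(s)$, not a functional equation of automorphic shape with the right $\Gamma$-factors---is the same gap the authors themselves flag in the subsection on the role of $L$-functions, and your fallback of verifying the mechanism on the provably automorphic families is what the paper actually does.

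Two cautions on the sketch itself. First, invoking Weil's converse theorem to pass from a single functional equation to automorphy under a congruence subgroup $\Gamma\subsetneq\mathsf{SL}_2(\IZ)$ requires functional equations for sufficiently many character twists of $L_\pm(s)$, which is strictly more than even an upgraded version of Definition~\ref{def:modular_res_struct} would supply; so the bootstrap you describe needs more input than you state. Second, your second step conditions the modularity conjecture on the summability conjecture (Conjecture~\ref{conj:quantum_modular1-Q}), whereas the paper keeps the two logically independent---in the proved cases the quantum modularity (Theorem~\ref{thm:lewis-Zagier_QM}) is established directly from the period theory without passing through the median resummation, which is handled separately and under an extra parity hypothesis. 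Neither point invalidates the sketch as a research plan, but both should be acknowledged if it is to be presented as a route to a proof.
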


We stress that to perform a resurgent analysis of the formal power series $\tfrakg(y)$ appearing in Conjectures~\ref{conj:quantum_modular1-Q} and~\ref{conj:quantum_modular2-Q}, it is necessary to complexify the variable $y$. Namely, $\tfrakg(y)$ must be analytically continued to $y \in \IC$. In Conjecture~\ref{conj:quantum_modular1-Q}, the median resummation $\mathcal{S}_\theta^{\mathrm{med}}[\tfrakg](y)$ is then restricted to the original domain of the function $\frakg(y)$ to make sense of Eq.~\eqref{eq:median-Q}.

\begin{rmk}
Not all (holomorphic) quantum modular forms give rise to MRSs. Indeed, by construction, those whose cocycle $h_\gamma$ has a convergent asymptotic expansion at $\gamma^{-1}(\infty)$ for all $\gamma \in \Gamma$ cannot be described via modular resurgence.
For instance, this is the case of the function $\tilde{\Delta}$ defined in Eq.~\eqref{eq:cocycle-Delta}, whose cocycle is a polynomial---hence, its resurgent structure is empty.
\end{rmk} 

Note that the generating function of the Stokes constants in Eq.~\eqref{eq:f_Am} is supported on $\IC\setminus\IR$. Indeed, all statements above can be recast for a $q$-series $\frakg\colon\IH_{-}\to\IC$, in which case the relevant asymptotic expansion is obtained in the limit $y\to 0$ with $\Im(y)<0$. 

As we discuss in Section~\ref{sec:examples}, Conjectures~\ref{conj:quantum_modular1} and~\ref{conj:quantum_modular2} are verified by the generating functions of the weak and strong coupling Stokes constants for $\log \mathrm{Tr}(\mrho_{\IP^2})$~\cite{Rella22, FR1phys} (see Section~\ref{sec:example-P2}) and a family of examples built from the theory of Maass cusp forms and their periods (see Theorems~\ref{thm:lewis-Zagier_QM} and~\ref{thm:lewis-Zagier_median}). Besides, if we relax the assumption that $\frakg$ must be a $q$-series, then MRSs satisfying our conjectures can be built from general cusp forms (see Theorem~\ref{thm:modular}).
Evidence in support of Conjectures~\ref{conj:quantum_modular1-Q} and~\ref{conj:quantum_modular2-Q} is given by the Kontsevich--Zagier series\footnote{In the case of the Kontsevich--Zagier series, the resurgent structure has a half-tower of fractional power singularities, and not simple poles, contrary to what occurs in the examples discussed in this paper.}~\cite{costin-garoufalidis, zagier2001vassiliev}, the WRT invariants for a family of Seifert fibered homology spheres~\cite[Theorem~3]{qCS7}, the LMO ($\hat{Z}$) invariants for a family of plumbed 3-manifolds~\cite{BKM1,BKM2}, and conjecturally the $q$-series $\sigma$ and $\sigma^*$ in~\cite[Example~1]{zagier_modular}.
According to~\cite[Conjecture~1.4]{costin-garoufalidis}, yet more instances of Conjecture~\ref{conj:quantum_modular1-Q} might appear in quantum topology. We leave a more comprehensive overview of current evidence for future work.

\subsubsection*{The role of $L$-functions}

As we have already pointed out, the modular resurgence paradigm of Section~\ref{sec:new-resur-Lfunct} is essentially governed by the functional equation for the $L$-functions defined by the Stokes constants. 
However, at this stage, more assumptions may be needed to fully characterize these resurgent $L$-functions and potentially enable us to prove the modular resurgence conjectures of Section~\ref{sec: modular-structures}. 
Let us collect here some considerations in this direction.

Firstly, Proposition~\ref{prop:L-series} implies that the asymptotic series of the generating function of the Stokes constants is dictated by the analytic continuation of the corresponding $L$-function. This is the core of the modular resurgence paradigm represented by the commutative diagram in Eq.~\eqref{diag:resurgence-L funct}.
In some examples, as in the case of Eq.~\eqref{eq:functional-Lambda}, the functional equation for $L_{\pm}(s)$ in Eq.~\eqref{eq:lseries-pm} is written in terms of the same $L$-functions.
Equivalently, it might occur that the asymptotic expansion in the limit $y \to 0$ with $\Im(y)>0$ (respectively, $\Im(y)<0$) of the generating function of the Stokes constants $\{S_m\}$, $m \in \IZ_{\ne 0}$, which are obtained from the resurgence of a MRS $\tfrakg(y)$, reproduces the same $\tfrakg_{+}(y)$ (respectively, $\tfrakg_{-}(y)$). In this case, the diagram in Eq.~\eqref{diag:resurgence-L funct} reduces to the one below.
\begin{equation} \label{diag:simple resur}
\begin{tikzcd}[column sep=2.4em, row sep=2.8em]
\arrow[loop left]{l}{\substack{\text{functional} \\ \text{equation}}} L_{\pm}(s)\arrow[rrr, "\text{inverse Mellin}"] & & & \frakg(y) \arrow[rr,"y\to 0"] & & \tfrakg(y)  \arrow[rrr,"\text{resurgence}"] & & & \{ S_m \}\arrow[bend left=15]{llllllll}{\text{$L$-function}} 
\end{tikzcd}
\end{equation}
This simplified version of the modular resurgence paradigm is observed, among others, in the examples of the series $\sigma$ and $\sigma^*$, whose $L$-function was first studied by Cohen in~\cite{cohen1988q}.\footnote{The study of the resurgent structure of $\sigma$ and $\sigma^*$ was presented in~\cite[from min.~21.55]{Fantini-talk-IHES}. 
} More generally, a similar behavior is observed when $\frakg$ is the function associated with a Maass cusp form of the kind considered in~\cite{lewis-Zagier--period}. See Proposition~\ref{prop:lewis-Zagier-qms}.  

Secondly, we might ask how to distinguish which $L$-functions can appear within the modular resurgence framework and which cannot. For instance, can $L$-functions coming from modular forms occur? 
As we will discuss in Section~\ref{sec:modular}, when $L_g(s)$ is the $L$-function associated with a general cusp form $g: \IH \to \IC$, we can take the generating function in Eq.~\eqref{eq:f_Am} to be the Fourier series of $g(y)$ itself. However, the asymptotic expansion of the cusp form for $y \to 0$ gives a convergent power series---in other words, the resurgent structure is trivial. An additional step is then warranted, although at the cost of breaking modular invariance. Namely, we will argue that adding to $g$ an appropriate correction that is uniquely and explicitly determined by its Fourier coefficients, we can construct a new function $f: \IH \to \IC$, and expanding the latter in the limit $y \to 0$ yields a Gevrey-1 asymptotic series $\tilde{f} \in \IC[\![y]\!]$. In this case, the simplified modular resurgence diagram in Eq.~\eqref{diag:simple resur} does not close, as pictured below.
\begin{equation} \label{diag:simple resur-open}
\begin{tikzcd}[column sep=2.4em, row sep=2.8em]
L_{g}(s)\arrow[rrr, "\text{inverse Mellin}"] & & & g(y) \arrow[rr, dashed, red] & & f(y) \arrow[rr,"y\to 0"] & & \tilde{f}(y)  \arrow[rrr,"\text{resurgence}"] & & & \{ A_m \}\arrow[bend left=15]{llllllllll}{\text{$L$-function}} 
\end{tikzcd}
\end{equation}
This implies that $L$-functions coming from general cusp forms cannot appear in the original version of the modular resurgence paradigm.
Nonetheless, as we prove in Section~\ref{sec:modular}, the above formal power series $\tilde{f}$ fits the definition of a modular resurgent series and the function $f$ satisfies the theses of Conjectures~\ref{conj:quantum_modular1} and~\ref{conj:quantum_modular2} despite not being a $q$-series. 

\section{Examples of modular resurgent series} \label{sec:examples}
In this section, we describe two complete examples of the paradigm of modular resurgence. First, we show how the exact resurgent symmetry of Section~\ref{sec:new-resur-Lfunct} reproduces the strong-weak symmetry discovered in~\cite[Section~3]{FR1phys} once specialized to the spectral trace of local $\IP^2$. Then, we prove the conjectural statements brought forward in Section~\ref{sec: modular-structures} for a large class of modular resurgent series originating from the theory of Maass cusp forms. Finally, we present an example of modular resurgent structure built from general cusp forms.

\subsection{From topological strings} \label{sec:example-P2}
Let us compare the paradigm of modular resurgence with the \emph{strong-weak resurgent symmetry} of the spectral trace of local $\IP^2$ and show that the construction presented here and the one of~\cite[Section~3]{FR1phys} are coincident and complementary.

We start by recalling the necessary background from~\cite{Rella22, FR1phys}. Following the prescription of~\cite{GHM}, quantizing the mirror curve to local $\IP^2$ gives rise to a quantum operator acting on $L^2(\IR)$ whose inverse $\rho_{\IP^2}$ is positive-definite and of trace class. The first fermionic spectral trace\footnote{The fermionic spectral traces $Z_X(\bm{N},\hbar)$, $\bm{N} \in \IZ^{g_X}$, provide a non-perturbative definition of topological string theory on a toric CY threefold $X$ according to the topological string/spectral theory (TS/ST) correspondence~\cite{GHM, CGM2}. Here, $g_X$ is the genus of the mirror curve to $X$. The Planck constant $\hbar$ satisfies a strong-weak coupling duality $\hbar \propto g_s^{-1}$, where $g_s$ is the string coupling constant.} $Z_{\IP^2}(1, \hbar)=\mathrm{Tr}(\mrho_{\IP^2})$ is a well-defined analytic function of the quantum deformation parameter $\hbar \in \IC'$, which is known in closed form for $\Im(\hbar)>0$~\cite{KM}.
The all-orders perturbative expansions of the logarithm of the spectral trace in the limits of $\hbar \rightarrow 0$ and $\hbar \rightarrow \infty$ have been computed by the second author in~\cite{Rella22}. They give rise to the Gevrey-1 asymptotic series 
\begin{subequations}
\begin{align}
\phi(\hbar) &= \sum_{n=1}^{\infty} a_{2n} \hbar^{2n} \in \IQ[\![\hbar]\!] \, , \label{eq: phiP2} \\
\psi(\tau) &= \sum_{n=1}^{\infty} b_{2n} \tau^{2n-1} \in \IQ[\pi, \sqrt{3}] [\![\tau]\!] \, ,\quad \tau =-\tfrac{2 \pi}{3 \hbar} \, , \label{eq: phiP2infty}
\end{align}
\end{subequations}
whose Borel transforms possess a single tower of simple poles at the locations 
\be
\zeta_m= \CA_0 \ri m \, , \quad \eta_m= \CA_\infty \ri m \, , \quad m \in \IZ_{\ne 0} \, ,
\ee
where $\CA_0=\frac{4 \pi^2}{3}$ and $\CA_\infty=\frac{2 \pi}{3}$, respectively. The corresponding Stokes constants
\be
S_m = S_{-m} \in \sqrt{3} \ri \, \IQ_{>0} \, , \quad R_m =-R_{-m} \in \IQ_{\ne 0} \, , \quad m \in \IZ_{\ne 0} \, ,
\ee
are expressed explicitly as divisor sum functions and define the weak and strong coupling $L$-series
\be \label{eq:L-functP2}
L_0(s) = \sum_{m=1}^\infty\frac{S_m}{m^s} \, , \quad L_\infty(s) = \sum_{m=1}^\infty\frac{R_m}{m^s} \, ,
\ee
which converge in the right half-plane $\Re(s)>1$ and can be analytically continued to meromorphic functions on the complex $s$-plane~\cite[Section~4.4]{Rella22}. Therefore, $L_0$ and $L_\infty$ are $L$-functions, while $\phi$ and $\psi$ are MRSs according to Definition~\ref{def:modular_res_struct}.

In the companion paper~\cite{FR1phys}, we complete the resurgent $L$-functions in Eq.~\eqref{eq:L-functP2} into 
\begin{subequations}
\begin{align}
    \Lambda_0(s) &= \frac{3^{\frac{s}{2}-1}}{\ri \pi^{s+1}} \Gamma\left( \frac{s}{2} \right) \Gamma\left( \frac{s}{2}+1 \right) L_0(s) \, , \label{eq: defLambda0} \\
    \Lambda_\infty(s) &= \frac{3^{\frac{s}{2}-1}}{\pi^{s+1}} \Gamma\left( \frac{s+1}{2} \right)^2 L_\infty(s) \,  \label{eq: defLambdainfty} 
\end{align}
\end{subequations}
and show that $\Lambda_0$ and $\Lambda_\infty$ can be analytically continued to $\Re(s)<0$ through each other. In particular, they obey the combined functional equation
\be \label{eq: duality}
\Lambda_0(s) = \Lambda_\infty(-s) \, ,
\ee
which lies at the core of the exchange of information captured by the modular resurgence paradigm.
Simultaneously, we take the perspective of the generating functions $f_0(y)$ and $f_\infty(y)$, $y \in \IC\setminus\IR$, of the weak and strong coupling Stokes constants $\{S_m\}$ and $\{R_m\}$, $ m\in\IZ_{>0}$, which satisfy the parity properties
\be
f_0(-y)=-f_0(y) \, , \quad f_\infty(-y)=f_\infty(y) \, .
\ee
Their all-orders perturbative expansions in the limit $y \rightarrow 0$ with $\Im(y)>0$ are
\begin{subequations}
\begin{align}
\tilde{f}_0(y) &= -\frac{\pi \ri}{2} - \frac{3 \mathcal{V}}{2 \pi \ri y} - 2 \psi(y) \, , \label{eq: f0-psi}\\
\tilde{f}_\infty(y) &= - 3 \log \frac{\Gamma(2/3)}{\Gamma(1/3)} -\log(- 6 \pi \ri y) +2 \phi(2 \pi y) \, , \label{eq: finf-phi}
\end{align}
\end{subequations}
where $\mathcal{V}= 2 \Im\left(\mathrm{Li}_2(\re^{2 \pi \ri/3}) \right)$.

In the following, let us neglect the terms that do not enter the perturbative series in the RHS of Eqs.~\eqref{eq: f0-psi} and~\eqref{eq: finf-phi}. We will show that $\tilde{f}_0$ and $\tilde{f}_\infty$ fit into the newly introduced paradigm of modular resurgence. This offers a supplementary understanding of the strong-weak resurgent symmetry discussed in~\cite[Section~3]{FR1phys}, particularly highlighting the central role played by the functional equation in Eq.~\eqref{eq: duality}. 
For completeness, we retrace each step in the paradigm for this key example. 
\begin{itemize}
 \item The resurgent structure of $\tilde{f}_{\infty}(y)$ is dictated by the formal power series $2 \phi(2 \pi y)$ according to Eq.~\eqref{eq: finf-phi} and is therefore modular. We write it for simplicity as $\{\eta_m, \, S_m\}$, $m\in\IZ_{\neq 0}$, where the Stokes constants satisfy $S_{-m}=S_m$. 
 \item The resurgent $L$-series $L_0(s)$, $s \in \IC$ with $\Re(s)>1$, has coefficients given by the Stokes constants $S_m$ for $m \in \IZ_{>0}$. Note that $L_0(s) = \pm L_{0, \pm}(s)$ in the notation of Eq.~\eqref{eq:lseries-pm}. 
\item The generating function $f_0$ is given by the inverse Mellin transform of $L_0$ as in Eq.~\eqref{eq: invmellin-formula}. Let 
\be
\tilde{f}_0(y)=\sum_{n=1}^\infty c_n^0 y^n\in\IC\llbracket y\rrbracket 
\ee
be its asymptotic series as $y\to 0$ with $\Im(y) >0$. By Proposition~\ref{prop:L-series}, particularly Eq.~\eqref{eq: coeff-l1}, the perturbative coefficients $c_n^{0}$, $n \in \IZ_{> 0}$, satisfy
\be \label{eq: lemma17P2}
    c_n^{0}=L_0(-n) \frac{(2\pi \ri)^{n}}{n!} \, , 
\ee
where we make sense of the RHS via the functional equation for the completed $L$-function $\Lambda_0$. In particular, substituting Eqs.~\eqref{eq: defLambda0} and~\eqref{eq: defLambdainfty} into Eq.~\eqref{eq: duality}, we find that
\be \label{eq: L0-cont}
    L_0(-s)=-\frac{2 \ri 3^s}{\pi^{2s} s}\frac{\Gamma\left(\frac{s+1}{2}\right)^2}{\Gamma\left(-\frac{s}{2}\right)^2} L_\infty(s) \, .
\ee
Notice that $L_0(-s)$ tends to zero for $s=n\in\IZ_{> 0}$ even as a consequence of the divergence of the gamma function in the denominator. Thus, we set $c_{2n}^0=0$, $n\in\IZ_{> 0}$.
Then, taking $s= n \in \IZ_{> 0}$ odd, applying the well-known properties of the gamma function, and using the series representation for $L_\infty$ in Eq.~\eqref{eq:L-functP2}, Eq.~\eqref{eq: L0-cont} becomes
\be
\begin{aligned}
L_0(-n)&=-\frac{2\ri}{\pi} n!(n-1)! (-2 \pi \ri)^{-n} \sum_{m=1}^\infty\frac{R_m}{\eta_m^n}\, . 
\end{aligned}
\ee
Applying Eq.~\eqref{eq: lemma17P2}, the non-trivial perturbative coefficients $c_{2n+1}^0$, $n \in \IZ_{\ge 0}$, are
\be
    c_{2n+1}^{0}=-2 \frac{\Gamma(2n+1)}{\pi \ri} \sum_{m=1}^\infty\frac{R_m}{\eta_m^{2n+1}} = -2 b_{2n+2} \, ,
\ee
where we have used the exact large-order identity in Eq.~\eqref{eq: cn-Am} for the coefficients $\{b_{2k}\}$, $k \in \IZ_{>0}$, of the formal power series $\psi$ in Eq.~\eqref{eq: phiP2infty}. This is in agreement with Eq.~\eqref{eq: f0-psi}. 
\item The resurgent structure of $\tilde{f}_0(y)$ is dictated by the formal power series $-2\psi(y)$ according to Eq.~\eqref{eq: f0-psi} and is therefore modular. We write it for simplicity as $\{\eta_m, R_m\}$, $m\in\IZ_{\neq 0}$, where the Stokes constants satisfy $R_{-m}=-R_m$.
\item The resurgent $L$-series $L_\infty(s)$, $s \in \IC$ with $\Re(s)>1$, has coefficients given by the Stokes constants $R_m$ for $m \in \IZ_{>0}$. Note that $L_\infty(s) = L_{\infty, \pm}(s)$ in the notation of Eq.~\eqref{eq:lseries-pm}. 
\item The generating function $f_\infty$ is given by the inverse Mellin transform of $L_\infty$ as in Eq.~\eqref{eq: invmellin-formula}. Let 
\be
\tilde{f}_\infty(y)=\sum_{n=1}^\infty c_n^\infty y^n\in\IC\llbracket y\rrbracket
\ee
be its asymptotic series as $y\to 0$ with $\Im(y)>0$. By Proposition~\ref{prop:L-series}, particularly Eq.~\eqref{eq: coeff-l1}, its perturbative coefficients $c_n^{\infty}$, $n \in \IZ_{> 0}$, satisfy
\be \label{eq: lemma17P2-2}
    c_n^{\infty}=L_\infty(-n) \frac{(2\pi \ri)^{n}}{n!} \, , 
\ee
where we make sense of the RHS via the functional equation for the completed $L$-function $\Lambda_\infty$. In particular, Eq.~\eqref{eq: L0-cont} gives
\be \label{eq: Linf-cont}
    L_\infty(-s)=\frac{3^{s} s}{2 \ri \pi^{2s}}\frac{\Gamma\left(\frac{s}{2}\right)^2}{\Gamma\left(\frac{1-s}{2}\right)^2} L_0(s) \, .
\ee
Notice that $L_\infty(-s)$ tends to zero for $s=n\in\IZ_{\ge 0}$ odd due to the divergence of the gamma function in the denominator.
Hence, we set $c_{2n+1}^\infty=0$, $n\in\IZ_{\ge 0}$.
Then, taking $s= n \in \IZ_{>0}$ even, applying the well-known properties of the gamma function, and using the series representation for $L_0$ in Eq.~\eqref{eq:L-functP2}, Eq.~\eqref{eq: Linf-cont} becomes
\be
\begin{aligned}
L_\infty(-n)&=\frac{2}{\ri \pi} n!(n-1)! (-2 \pi)^{n} \sum_{m=1}^\infty\frac{S_m}{\zeta_m^n}\, . 
\end{aligned}
\ee
Applying Eq.~\eqref{eq: lemma17P2-2}, the non-trivial perturbative coefficients $c_{2n}^\infty$, $n \in \IZ_{>0}$, are 
\be
    c_{2n}^{\infty}=2 (2 \pi)^{2n} \frac{\Gamma(2n)}{\pi \ri} \sum_{m=1}^\infty\frac{S_m}{\zeta_m^{2n}} = 2 (2 \pi)^{2n} a_{2n} \, ,
\ee
where we have used the exact large-order identity in Eq.~\eqref{eq: cn-Am} for the coefficients $\{a_{2k}\}$, $k \in \IZ_{>0}$, of the formal power series $\phi$ in Eq.~\eqref{eq: phiP2}. Again, this agrees with Eq.~\eqref{eq: finf-phi}.
\end{itemize}
\begin{rmk}
    In the discussion above, we have not considered the term of order $y^0$ in our definitions of $\tilde{f}_0(y)$ and $\tilde{f}_\infty(y)$. We observe, however, that direct evaluation of $L_0(s)$ at $s=0$ using the closed formula in~\cite[Eq.~(4.174a)]{Rella22} together with Eq.~\eqref{eq: lemma17P2} gives the perturbative coefficient 
    \be
    c_0^0=L_0(0)=-\frac{\pi \ri}{2} \, , 
    \ee
    which agrees with Eq.~\eqref{eq: f0-psi}. At the same time, the formula for $L_\infty(s)$ in~\cite[Eq.~(4.174b)]{Rella22} is not well-defined at $s=0$---as it is also observed from Eq.~\eqref{eq: Linf-cont}. In particular, following Eq.~\eqref{eq: lemma17P2-2}, the perturbative coefficient $c_0^\infty$ would appear to diverge to infinity. We conclude that the constant term in Eq.~\eqref{eq: finf-phi} cannot be consistently derived from the functional equation for $L_\infty$.
\end{rmk}

We have shown that the weak and strong coupling resurgent structures of the spectral trace of local $\IP^2$ are a modular resurgent pair and satisfy the global symmetry of Section~\ref{sec:new-resur-Lfunct}. Remarkably, as we argue in~\cite{FR1phys}, this construction is responsible for the exchange of the perturbative/non-perturbative contributions to the holomorphic and anti-holomorphic blocks in the well-known factorization of the spectral trace, which can be intuitively traced back to the duality between the worldsheet and WKB contributions to the total grand potential of the topological string on local $\IP^2$.

Finally, we stress that both Conjectures~\ref{conj:quantum_modular1} and~\ref{conj:quantum_modular2} are explicitly proven for the generating function $f_0$.
Namely, $f_0$ is a holomorphic quantum modular function for the congruence subgroup $\Gamma_1(3) \subset \mathsf{SL}_2(\IZ)$~\cite[Theorem~4.6]{FR1phys} and is reconstructed by the median resummation of its asymptotic expansion~\cite[Theorem~4.7]{FR1phys}.
As for the generating function $f_\infty$, Conjecture~\ref{conj:quantum_modular2} is proven with respect to the same group $\Gamma_1(3)$~\cite[Theorem~4.7]{FR1phys}, while Conjecture~\ref{conj:quantum_modular1} is only supported by numerical evidence~\cite[Conjecture~1]{FR1phys}.

\subsection{From Maass cusp forms} \label{sec:example-Maass}
A large class of examples of MRSs can be derived from a pair of $L$-functions satisfying a special type of functional equation and deeply related to \emph{Maass cusp forms} with spectral parameter $1/2$.
Recall that a Maass cusp form with spectral parameter $\mu\in\IC$ is a smooth $\mathsf{PSL}_2(\IZ)$-invariant function $u\colon\IH\to\IC$ that satisfies the following properties:
\begin{enumerate}
    \item $u$ is an eigenfunction of the hyperbolic Laplacian $\Delta$ with eigenvalue $\mu(1-\mu)$; 
    \item $u(y)=O(\Im(y)^C)$ as $\Im(y) \to +\infty$ for some constant $C \in \IR$.
\end{enumerate}
These functions represent a basis for $L^2$ on the modular curve $\mathsf{PSL}_2(\IZ)\setminus \IH$ and are in one-to-one correspondence with pairs of Dirichlet series $L_\epsilon$, $\epsilon=0,1$, that converge in some right half-plane of $\IC$ and whose completed $L$-functions $\Lambda_\epsilon(s)=\gamma_\mu(s+\epsilon) L_\epsilon(s)$, where
\be 
\gamma_\mu(s)=\frac{1}{4\pi^s}\Gamma\Big(\tfrac{s-\mu+1/2}{2}\Big) \Gamma\Big(\tfrac{s+\mu-1/2}{2}\Big)\,,
\ee
are entire functions of finite order and satisfy the functional equations in Eq.~\eqref{eq:functional-Lambda}~\cite{Maass}.
It was shown in~\cite{lewis-Zagier--period} that there are two alternative equivalent descriptions of Maass cusp forms in terms of holomorphic functions on either $\IC\setminus \IR$ or $\IC'$ with specific features. 
Building on the results of~\cite{lewis-Zagier--period}, we consider here the $L$-functions corresponding to Maass cusp forms with $\mu=1/2$.

\begin{prop}\label{prop:lewis-Zagier-qms}
Let $\{A_m\}$, $m \in \IZ_{\ne 0}$, be a sequence of complex numbers such that the Dirichlet series 
\be \label{eq: lseries-LZ}
L_\epsilon(s):=\sum_{m=1}^\infty \frac{A_m+(-1)^\epsilon A_{-m}}{m^s} \, , \quad \epsilon=0,1 \, , 
\ee
are absolutely convergent in some right half-plane of $\IC$ and the corresponding completed $L$-functions   
\be \label{eq: Lambdaeps-gammafact} \Lambda_\epsilon(s):=\gamma(s+\epsilon)L_\epsilon(s) \, , \quad \text{where} \quad \gamma(s)=\frac{1}{4\pi^s}\, \Gamma\big(\tfrac{s}{2}\big)^2\,,
\ee
are entire functions of finite order and satisfy the functional equations
\begin{equation}\label{eq:functional-Lambda}
    \Lambda_\epsilon(1-s)=(-1)^\epsilon \Lambda_\epsilon(s) \, .
\end{equation}
Let $f\colon\IC\setminus\IR \to \IC$ be the generating function
\begin{equation}\label{eq: LZ-f}
    f(y):=\begin{cases}
    \displaystyle\sum_{m>0}A_m \re^{2\pi \ri my} & \mbox{if} \quad \Im(y)>0  \\
    & \\
    -\displaystyle\sum_{m<0}A_m \re^{2\pi \ri my} & \mbox{if} \quad \Im(y)<0 
\end{cases} \, ,
\end{equation} 
and $\tilde{f}_{+}(y)$ and $\tilde{f}_-(y)$ be its asymptotic series as $y\to 0$ with $\Im(y)>0$ and $\Im(y)<0$, respectively. Then, $\tilde{f}_\pm$ are modular resurgent series. 
\end{prop}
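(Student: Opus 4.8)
The plan is to reduce everything to the coefficient formula of Proposition~\ref{prop:L-series} and then read off the Borel singularity structure directly from the functional equations~\eqref{eq:functional-Lambda}. Since $f$ restricted to $\IH$ is the generating series of $\{A_m\}_{m>0}$, whose Dirichlet series is $L_+(s)=\tfrac12(L_0(s)+L_1(s))$ in the notation of~\eqref{eq: lseries-LZ}, Eq.~\eqref{eq: coeff-l1} gives the perturbative coefficients of $\tilde f_+(y)=\sum_{n\ge 1}c_n^+y^n$ as $c_n^+=L_+(-n)(2\pi\ri)^n/n!$, and analogously for $\tilde f_-$ with $L_-(s)=\tfrac12(L_1(s)-L_0(s))$. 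I would first observe that the hypothesis that the completed $\Lambda_\epsilon=\gamma(\,\cdot+\epsilon)L_\epsilon$ are entire, together with the fact that $\gamma(s)=\tfrac{1}{4\pi^s}\Gamma(s/2)^2$ has no zeros, already forces $L_0,L_1$ — hence $L_\pm$ — to extend to entire functions. In particular the values $L_\pm(-n)$ are well defined and the continuation to $\{\Re(s)<0\}$ demanded by condition~3 of Definition~\ref{def:modular_res_struct} is automatic.

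Next I would evaluate $L_\epsilon(-n)$ via the functional equation. Setting $s=n+1$ in~\eqref{eq:functional-Lambda} gives $L_\epsilon(-n)=(-1)^\epsilon\frac{\gamma(n+1+\epsilon)}{\gamma(-n+\epsilon)}L_\epsilon(n+1)$, where $L_\epsilon(n+1)$ is, for $n$ large, the convergent Dirichlet series $\sum_{m>0}(A_m+(-1)^\epsilon A_{-m})m^{-n-1}$. The $\Gamma^2$ in $\gamma$ produces the trivial zeros $L_0(-n)=0$ for $n$ even and $L_1(-n)=0$ for $n$ odd, so exactly one of the two survives at each $n$. The gamma prefactor is simplified by reflection, $\Gamma(-\tfrac n2)^2=\pi^2/\Gamma(\tfrac n2+1)^2$ for $n$ odd and its even-$n$ analogue, followed by Legendre duplication $\Gamma(\tfrac{n+1}{2})\Gamma(\tfrac n2+1)=2^{-n}\sqrt\pi\,n!$, collapsing both cases to the single clean identity
\be
\frac{\gamma(n+1+\epsilon)}{\gamma(-n+\epsilon)}=\frac{(n!)^2}{4^n\pi^{2n+2}}\,.
\ee
Feeding this back and recombining the $\epsilon=0,1$ contributions over the two signs of $m$ yields, after reindexing $m\mapsto k\in\IZ_{\ne 0}$, a closed form of the type
\be
c_n^+=-\frac{n!}{2\pi^2}\sum_{k\in\IZ_{\ne 0}}\frac{A_k/k}{(2\pi\ri k)^n}\,,
\ee
with singularity locations $\zeta_k=2\pi\ri k=\CE k$, $\CE=2\pi\ri$, and an entirely parallel expression for $c_n^-$.

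From here the resurgent structure is immediate. The appearance of $n!=\Gamma(n+1)$ rather than $\Gamma(n)$ signals that the poles are \emph{double} rather than simple: summing the Borel series gives
\be
\CB[\tilde f_+](\zeta)=-\frac{\ri}{\pi}\sum_{k\in\IZ_{\ne 0}}\frac{A_k}{(\zeta-2\pi\ri k)^2}\,,
\ee
a single tower of double poles at $\zeta_k=\CE k$ whose attached datum is the constant $A_k$, with no non-trivial secondary series resurging. This is exactly the behaviour permitted by Definition~\ref{def:modular_res_struct} (see the discussion following it, where the type of singularity is deliberately left unspecified and only affects the coefficient formula — here the analogue of~\eqref{eq: cn-Am} carrying $\Gamma(n+1)$ in place of $\Gamma(n)$), so conditions~1 and~2 hold and condition~3 was verified above. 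Gevrey-1 growth follows from the same closed form: since $\sum_{k\ne 0}|A_k|\,|k|^{-n-1}$ is bounded uniformly in $n$ (dominated by its $k=\pm1$ terms once $n+1>\alpha$), one obtains $|c_n^\pm|\le C\,n!\,(2\pi)^{-n}$, so $\tilde f_\pm\in\IC[\![y]\!]$ are Gevrey-1 and therefore genuine modular resurgent series.

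The technical heart, and the step most likely to hide sign and normalization pitfalls, is the gamma-factor bookkeeping of the second paragraph: tracking the parity-dependent trivial zeros, applying reflection and duplication uniformly in $\epsilon$, and recombining the even/odd pieces into one Dirichlet-type sum over $k\in\IZ_{\ne 0}$. A secondary care point is legitimizing the interchange of the $n$- and $m$-summations and the passage through the functional equation for the small values of $n$ where $L_\epsilon(n+1)$ is defined only by analytic continuation rather than by its convergent series; here the finite order and entireness of $\Lambda_\epsilon$ do the work. The $\tilde f_-$ computation is identical up to the reflection $y\mapsto -y$ and the relabelling of the coefficients, and only needs to be recorded.
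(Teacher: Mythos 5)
Your proposal is correct and follows essentially the same route as the paper: both reduce to the coefficient formula $c_n^\pm = L_\pm(-n)(2\pi\ri)^n/n!$ of Proposition~\ref{prop:L-series}, evaluate $L_\pm(-n)$ through the functional equation with the same parity-dependent trivial zeros and the same gamma-factor identity, and recombine the even/odd pieces into a single sum over $m\in\IZ_{\ne 0}$ (your closed form for $c_n^+$ agrees exactly with Eq.~\eqref{eq: LZ-ftildeP}). The only divergence is the final packaging: the paper multiplies by $y$ so that $\CB[y\tilde f_\pm]$ has \emph{simple} poles at $2\pi\ri m$ with Stokes constants $2A_m$ (resp.\ $-2A_{-m}$) and Proposition~\ref{prop:stokes} applies verbatim, whereas you keep $\tilde f_\pm$ itself and correctly identify \emph{double} poles at the same locations---a normalization choice that is harmless here since Definition~\ref{def:modular_res_struct} deliberately leaves the singularity type unspecified.
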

\begin{proof}    
    Let us introduce the two $L$-functions
    \begin{subequations}
        \begin{align}
            L_+(s)&= \sum_{m=1}^\infty \frac{A_m}{m^s}=\frac{L_{0}(s)+L_1(s)}{2}  \, , \label{eq: L0-L1-L-def} \\
            L_-(s)&= -\sum_{m=1}^\infty \frac{A_{-m}}{m^s}=\frac{L_{1}(s)-L_0(s)}{2} \, , \label{eq: L0-L1-Lminus-def} 
        \end{align}
    \end{subequations}
    as in Eq.~\eqref{eq:lseries-pm}. It follows from Proposition~\ref{prop:L-series}, particularly from Eq.~\eqref{eq: coeff-l1}, that the asymptotic series of $f(y)$ as $y\to 0$ with $\Im( y)>0$ can be written as 
    \be \label{eq: ftilde-L-proof}
    \tilde{f}_{+}(y)=\sum_{n=0}^\infty L_+(-n)\frac{(2\pi \ri )^n}{ n!}y^n \, .
    \ee
    Similarly, its asymptotic series as $y\to 0$ with $\Im(y)<0$ is 
    \be \label{eq: ftilde-Lminus-proof}
    \tilde{f}_{-}(y)=\sum_{n=0}^\infty L_{-}(-n)\frac{(2\pi \ri )^n}{ n!}y^n \, .
    \ee
    We can now apply the functional equation for $\Lambda_\epsilon(s)$ in Eq.~\eqref{eq:functional-Lambda} to make sense of $L_\pm(-n)$ for $n \in \IZ_{\ge 0}$. 
    To do so, let us consider separately the case of $n \in \IZ_{\ge 0}$ even (respectively, odd) for $L_1(-n)$ (respectively, $L_0(-n)$) due to the expression for the gamma factor $\gamma(s)$.
    For $n \in \IZ_{\ge 0}$ even, we have that 
    \be \label{eq: L1-n}
    \begin{aligned}
        L_1(-n)&=-\frac{\gamma(n+2)}{\gamma(1-n)} L_1(n+1) =-\frac{2^{-2n}}{\pi^{2n+2}} \Gamma(n+1)^2 L_1(n+1) \, ,
    \end{aligned}
    \ee
    where we have used the definition for the gamma factor in Eq.~\eqref{eq: Lambdaeps-gammafact} and applied the well-known identities for the gamma function.
    Similarly, for $n \in \IZ_{> 0}$ odd, we have that
    \be \label{eq: L0-n}
    \begin{aligned}
        L_0(-n)&=\frac{\gamma(n+1)}{\gamma(-n)} L_0(n+1) =\frac{2^{-2n}}{\pi^{2n+2}} \Gamma(n+1)^2 L_0(n+1) \, .
    \end{aligned}
    \ee
    Since the gamma function has simple poles at the non-positive integers, we set $L_1(-n)=0$ for $n \in \IZ_{> 0}$ odd and $L_0(-n)=0$ for $n \in \IZ_{\ge 0}$ even.
    Therefore, putting together Eqs.~\eqref{eq: L0-L1-L-def},~\eqref{eq: L1-n}, and~\eqref{eq: L0-n}, we find that 
    \be \label{eq: L-n}
    \begin{aligned}
        \dfrac{1}{n!}L_+(-n)&=\begin{cases} -\dfrac{2^{-2n-1}}{\pi^{2n+2}}n!\, L_1(n+1) = -\dfrac{2^{-2n-1}}{\pi^{2n+2}}n!\, \displaystyle\sum_{m=1}^{\infty}\dfrac{A_m-A_{-m}}{m^{n+1}} & \text{ if } n \in \IZ_{\ge 0} \text{ even}\\
        & \\
          \dfrac{2^{-2n-1}}{\pi^{2n+2}}n!\, L_0(n+1) = \dfrac{2^{-2n-1}}{\pi^{2n+2}}n!\, \displaystyle\sum_{m=1}^{\infty}\dfrac{A_m+A_{-m}}{m^{n+1}}  & \text{ if } n \in \IZ_{> 0} \text{ odd}
        \end{cases}\\
        &=\frac{2^{-2n-1}}{\pi^{2n+2}}n! \left(\sum_{m=1}^{\infty} \frac{A_m}{(-m)^{n+1}}+\sum_{m=1}^{\infty} \frac{A_{-m}}{m^{n+1}}\right) \\
        &= \frac{2^{-2n-1}}{\pi^{2n+2}}n! \sum_{m \in \IZ_{\ne 0}} \frac{A_m}{(-m)^{n+1}} \, .
    \end{aligned}
    \ee
    Analogously, putting together Eqs.~\eqref{eq: L0-L1-Lminus-def},~\eqref{eq: L1-n}, and~\eqref{eq: L0-n}, we find that 
    \be \label{eq: Lminus-n}
    \begin{aligned}
        \dfrac{1}{n!}L_{-}(-n)&=\begin{cases} -\dfrac{2^{-2n-1}}{\pi^{2n+2}}n!\, L_1(n+1) = -\dfrac{2^{-2n-1}}{\pi^{2n+2}}n!\, \displaystyle\sum_{m=1}^{\infty}\dfrac{A_m-A_{-m}}{m^{n+1}} & \text{ if } n \in \IZ_{\ge 0} \text{ even}\\
        & \\
          -\dfrac{2^{-2n-1}}{\pi^{2n+2}}n!\, L_0(n+1) = -\dfrac{2^{-2n-1}}{\pi^{2n+2}}n!\, \displaystyle\sum_{m=1}^{\infty}\dfrac{A_m+A_{-m}}{m^{n+1}}  & \text{ if } n \in \IZ_{> 0} \text{ odd}
        \end{cases}\\
        &=-\frac{2^{-2n-1}}{\pi^{2n+2}}n! \left(\sum_{m=1}^{\infty} \frac{A_m}{m^{n+1}}+\sum_{m=1}^{\infty} \frac{A_{-m}}{(-m)^{n+1}}\right) \\
        &=- \frac{2^{-2n-1}}{\pi^{2n+2}}n! \sum_{m \in \IZ_{\ne 0}} \frac{A_m}{m^{n+1}} \, .
    \end{aligned}
    \ee
    To sum up, substituting Eq.~\eqref{eq: L-n} into Eq.~\eqref{eq: ftilde-L-proof} yields 
    \be \label{eq: LZ-ftildeP}
    \begin{aligned}
    \tilde{f}_{+}(y)&=2\sum_{n=0}^\infty n!\sum_{m\in\IZ_{\ne 0}}\frac{A_m}{(-4\pi^2m)^{n+1}}(2 \pi \ri)^n y^{n}\, \\
    &=\sum_{n=0}^\infty \left( \frac{n!}{\pi \ri}\sum_{m\in\IZ_{\ne 0}}\frac{A_m}{(2 \pi \ri m)^{n+1}}\right) y^{n}\,,
    \end{aligned}
    \ee
    which is a Gevrey-1 asymptotic series. Using Proposition~\ref{prop:stokes}, we conclude that the Borel transform of $y\tilde{f}_{+}(y)$ has a modular resurgent structure with simple poles at $\rho_m=2\pi \ri m$, $m\in\IZ_{\ne 0}$, and Stokes constants $2 A_m$. 
    Analogously, substituting Eq.~\eqref{eq: Lminus-n} into Eq.~\eqref{eq: ftilde-Lminus-proof} gives 
    \be \label{eq: LZ-ftildeM}
    \begin{aligned}
    \tilde{f}_{-}(y)&=-2\sum_{n=0}^\infty n!\sum_{m\in\IZ_{\ne 0}}\frac{A_m}{(4\pi^2m)^{n+1}}(2 \pi \ri)^n y^{n}\, \\
    &=-\sum_{n=0}^\infty \left( \frac{n!}{\pi \ri}\sum_{m\in\IZ_{\ne 0}}\frac{A_{-m}}{(2 \pi \ri m)^{n+1}}\right) y^{n}\,,
    \end{aligned}
    \ee
    which is a Gevrey-1 asymptotic series. Once more, using Proposition~\ref{prop:stokes}, we conclude that the Borel transform of $y\tilde{f}_{-}(y)$ has a modular resurgent structure with simple poles at $\rho_m=2\pi \ri m$, $m\in\IZ_{\ne 0}$, and Stokes constants $-2 A_{-m}$. 
\end{proof}

Notice that the MRSs of Proposition~\ref{prop:lewis-Zagier-qms} satisfy the simplified diagram in Eq.~\eqref{diag:simple resur}.
Additionally, the generating function defined in Eq.~\eqref{eq: LZ-f} satisfies Conjecture~\ref{conj:quantum_modular2}.
\begin{theorem}\label{thm:lewis-Zagier_QM}
    Within the assumptions of Proposition~\ref{prop:lewis-Zagier-qms}, the generating function $f:\IC\setminus\IR\to\IC$, defined in Eq.~\eqref{eq: LZ-f}, restricted to either the upper or the lower half of the complex plane is a weight-$1$ holomorphic quantum modular form for the group $\mathsf{SL}_2(\IZ)$. 
\end{theorem}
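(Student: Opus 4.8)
The plan is to verify the cocycle condition of Definition~\ref{def: holoQM} on generators of $\mathsf{SL}_2(\IZ)$ and propagate it by the cocycle relation; I argue for the upper half-plane, the lower half-plane being identical after exchanging the branch of $f$ in Eq.~\eqref{eq: LZ-f} and using the analogue of Definition~\ref{def: holoQM} on $\IH_-$. Write $S=\left(\begin{smallmatrix}0&-1\\1&0\end{smallmatrix}\right)$ and $T=\left(\begin{smallmatrix}1&1\\0&1\end{smallmatrix}\right)$, which generate the group. For the weight-$1$ action one has $h_{\gamma_1\gamma_2}[f](y)=(cy+d)^{-1}h_{\gamma_1}[f]\!\left(\tfrac{ay+b}{cy+d}\right)+h_{\gamma_2}[f](y)$ for $\gamma_2=\left(\begin{smallmatrix}a&b\\c&d\end{smallmatrix}\right)$, so it suffices to extend $h_S[f]$ and $h_T[f]$ holomorphically to $\IC_S=\IC'$ and $\IC_T=\IC$; the extension for a general $\gamma$ then follows because the relevant cuts $\IC_\gamma$ are permuted compatibly by the action. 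Since $f$ is a $q$-series it is $1$-periodic, so $h_T[f]\equiv 0$ extends trivially, and everything reduces to the $S$-cocycle $h_S[f](y)=y^{-1}f(-1/y)-f(y)$, holomorphic on $\IH$ by construction. A direct computation using $f(y+1)=f(y)$ and $f(-1-1/y)=f(-1/y)$ yields the three-term functional equation
\be
h_S[f](y)=h_S[f](y+1)+(y+1)^{-1}h_S[f]\!\left(\tfrac{y}{y+1}\right),
\ee
valid wherever all terms are defined; this is the weight-$1$ (spectral parameter $1/2$) Lewis--Zagier relation, and it will drive the analytic continuation.

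To locate the domain of holomorphy I identify $h_S[f]$ with a Lewis--Zagier period function. Restricting to the imaginary axis, put $G(t):=h_S[f](\ri t)=-\tfrac{\ri}{t}f(\ri/t)-f(\ri t)$ for $t>0$. Using the Mellin representation of $f$ on the imaginary axis from Proposition~\ref{prop:inverse Mellin} and the change of variables $t\mapsto 1/t$ in the first term, the Mellin transform of $G$ is
\be
\int_0^\infty t^{s-1}G(t)\,dt=-\xi(s)-\ri\,\xi(1-s),\qquad \xi(s):=(2\pi)^{-s}\Gamma(s)L_+(s),
\ee
so that the reflection $s\mapsto 1-s$ is the shadow of the functional equation. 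Because $\Lambda_\epsilon$ is entire and $1/\Gamma$ is entire, the $L$-series $L_0,L_1$, and hence $L_+$, continue to entire functions; the finite order of $\Lambda_\epsilon$ together with Stirling's formula and Phragm\'en--Lindel\"of give exponential decay of the integrand on vertical lines, so inverse Mellin recovers $G(t)$. Substituting the functional equations $\Lambda_\epsilon(1-s)=(-1)^\epsilon\Lambda_\epsilon(s)$ into the reflected term rewrites $G$ as an integral transform, against a $K$-Bessel kernel, of the restriction to the imaginary axis of the Maass cusp form of spectral parameter $1/2$ with Fourier coefficients $A_m$; this is, up to normalization, its Lewis--Zagier period function.

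It remains to read off the holomorphy. The $K$-Bessel kernel is holomorphic on $\IC'$ and the Maass cusp form decays rapidly as $t\to 0$ and $t\to\infty$, so the period function, hence $h_S[f]$, extends holomorphically from $\IH$ across $\IR_{>0}$ to all of $\IC'$. The three-term functional equation then transports this holomorphy across the $\mathsf{SL}_2(\IZ)$-translates that exhaust $\IC'$ and preserves the weight, and the cocycle relation delivers the extension of $h_\gamma[f]$ to $\IC_\gamma$ for every $\gamma$. This establishes that $f$, restricted to $\IH$, is a weight-$1$ holomorphic quantum modular form for $\mathsf{SL}_2(\IZ)$.

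The main obstacle is precisely this last continuation: upgrading the holomorphy of $h_S[f]$ from $\IH$, where it is automatic, to the full cut plane $\IC'$. The Mellin--Barnes integral only certifies holomorphy in the sector of convergence around the imaginary axis, so it cannot by itself cross $\IR_{>0}$; the crossing is genuinely the content of the Lewis--Zagier correspondence. This is where the hypotheses that $\Lambda_\epsilon$ be entire of finite order with the specific gamma factor $\gamma(s)=\tfrac{1}{4\pi^s}\Gamma(s/2)^2$ are indispensable---they are the converse-theorem input producing the Maass cusp form together with the $K_0$-kernel holomorphic on $\IC'$.
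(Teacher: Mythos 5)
Your proof has the same skeleton as the paper's: reduce to the generators, kill $h_T[f]$ by $1$-periodicity, and identify $h_S[f](y)=y^{-1}f(-1/y)-f(y)$ with (a multiple of) the Lewis--Zagier period function, whose holomorphy on $\IC'=\IC\setminus\IR_{\le 0}$ is what makes $f$ a weight-$1$ holomorphic quantum modular form. The difference is that the paper's proof is essentially a citation: part (d) of Theorem~1 in Chapter~I of Lewis--Zagier already asserts, under exactly the hypotheses of Proposition~\ref{prop:lewis-Zagier-qms} (the $\Lambda_\epsilon$ entire of finite order with gamma factor $\Gamma(s/2)^2$ and the functional equations), the existence of an analytic $\psi\colon\IC'\to\IC$ with $c\,\psi(y)=f(y)-y^{-1}f(-1/y)$, so $h_S[f]=-c\,\psi$ extends to $\IC'$ and one is done. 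You instead try to re-derive that part of the theorem (Mellin inversion, converse theorem, $K$-Bessel integral transform), and the one genuinely hard step --- continuing $h_S[f]$ from $\IH$ across $\IR_{>0}$ --- is asserted rather than established: the integral of $u(\ri t)$ against the relevant kernel is manifestly holomorphic only away from the locus where the kernel's denominator vanishes (the imaginary axis for the naive representation), and gluing the various representations into a single function holomorphic on all of $\IC'$ is precisely the nontrivial content of Lewis--Zagier Chapter~II, not a consequence of ``the $K$-Bessel kernel is holomorphic on $\IC'$.'' Likewise, the final appeal to the three-term equation to ``transport holomorphy across translates that exhaust $\IC'$'' is not needed once $\psi$ is known to be holomorphic on $\IC'$, and on its own it does not close the gap. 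The clean repair is simply to quote Theorem~1(d) of Lewis--Zagier for the continuation, as the paper does; with that substitution your argument coincides with the paper's.
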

\begin{proof}
    Recall from part $(d)$ of Theorem~$1$ in~\cite[Chapter~1]{lewis-Zagier--period} that there exists an analytic function $\psi\colon\IC'\to\IC$ such that\footnote{The function $\psi$ in Eq.~\eqref{eq: psi-LZ} is the so-called \emph{period} of the function $f$ in Eq.~\eqref{eq: LZ-f} and satisfies a three-terms functional equation. See~\cite{lewis-Zagier--period} for details.} 
    \begin{equation}\label{eq: psi-LZ}
      c\,\psi(y)=f(y)-\frac{1}{y} f\left(-\frac{1}{y}\right) \, ,
    \end{equation}
for some constant $c \in \IC_{\ne 0}$. Moreover, recall that the generators of the modular group $\mathsf{SL}_2(\IZ)$ are
\be \label{eq:gen_TS}
    T=\begin{pmatrix}
        1 & 1\\
        0 & 1
    \end{pmatrix}\,, \quad  S=\begin{pmatrix}
    0 & -1 \\
    1 & 0
\end{pmatrix}\,.
\ee
Hence, if we restrict the generating function to the upper half-plane, \emph{i.e.}, we take $f\colon\IH\to\IC$, the cocycle for $T$ is trivial due to the periodicity property $f(y+1)=f(y)$, while the cocycle for $S$ is given by
\be
h_S[f](y)=-c\,\psi(y) \, .
\ee
The same arguments apply to the restriction of the generating function to $\IH_-$.
\end{proof}

With an additional assumption on the parity of the coefficients $A_m$, $m \in \IZ_{\ne 0}$, appearing in the definition of the $L$-functions $L_\epsilon$, $\epsilon=0,1$, in Eq.~\eqref{eq: lseries-LZ}, we prove the effectiveness of the median resummation for the generating function $f$ in Eq.~\eqref{eq: LZ-f}, which therefore satisfies Conjecture~\ref{conj:quantum_modular1}. In particular, assuming $A_m=A_{-m}$, we have that $f(-y)=-f(y)$ for $y \in \IC\setminus\IR$, so that $f(y)$ only needs to be specified in the upper half of the complex $y$-plane.
\begin{theorem}\label{thm:lewis-Zagier_median}
    Within the assumptions of Proposition~\ref{prop:lewis-Zagier-qms}, if $A_m=A_{-m}$ for every $m \in \IZ_{\ne 0}$, the generating function $f\colon\IH \to \IC$ defined in Eq.~\eqref{eq: LZ-f} is recovered from its asymptotic expansion $\tilde{f}_+(y)$ as $y\to 0$ with $\Im(y)>0$ through the median resummation. Precisely,
    \be
        f(y)=\mathcal{S}^{\mathrm{med}}_{\frac{\pi}{2}}\big[\tilde{f}_+\big](y) \, , \quad y \in \IH \, .
    \ee
\end{theorem}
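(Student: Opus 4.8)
The plan is to make the Borel transform of $y\tilde{f}_+(y)$ explicit, read off its Stokes data along the single relevant ray, and then match the two lateral Borel--Laplace sums to $f$ and to its companion $f(-1/y)$ using the period relation of Lewis and Zagier. First I would record, from Proposition~\ref{prop:lewis-Zagier-qms} and Eq.~\eqref{eq: LZ-ftildeP}, that $\CB[y\tilde{f}_+](\zeta)=-\frac{1}{\pi\ri}\sum_{m\in\IZ_{\ne 0}}\frac{A_m}{\zeta-2\pi\ri m}$, so that the Borel plane carries a single tower of simple poles at $\rho_m=2\pi\ri m$ with Stokes constants $2A_m$. Since all poles lie on the imaginary axis, the only Stokes ray meeting $\IH$ is $\theta=\pi/2$, and the hypothesis $A_m=A_{-m}$ makes $\CB[y\tilde{f}_+]$ odd in $\zeta$, so $f$ is odd and it suffices to work in the upper half-plane.

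Next I would compute the discontinuity across $\theta=\pi/2$. By Eq.~\eqref{eq: Stokes1-poles} with $S_m=2A_m$ and $\zeta_m=2\pi\ri m$, one gets $\mathrm{disc}_{\pi/2}(y\tilde{f}_+)(y)=2\sum_{m>0}A_m\re^{-2\pi\ri m/y}=2f(-1/y)$, using that $-1/y\in\IH$ for $y\in\IH$ together with Eq.~\eqref{eq: LZ-f}. Inserting this into the median formula Eq.~\eqref{eq: median2} gives $\mathcal{S}^{\mathrm{med}}_{\pi/2}(y\tilde{f}_+)(y)=s_{(\pi/2)_-}(y\tilde{f}_+)(y)+f(-1/y)$. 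On the other hand, multiplying the Lewis--Zagier period relation Eq.~\eqref{eq: psi-LZ} by $y$ yields $yf(y)=c\,y\psi(y)+f(-1/y)$. Comparing the two displays, the theorem is \emph{equivalent} to the single identity $s_{(\pi/2)_-}(y\tilde{f}_+)(y)=c\,y\psi(y)$.

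Thus the heart of the proof is to identify the lower lateral Borel--Laplace sum with $c\,y\psi(y)$. My approach would be a uniqueness-of-resummation argument: both sides are analytic on an overlapping sectorial domain and share the Gevrey-1 asymptotic expansion $y\tilde{f}_+(y)$ as $y\to0$. Indeed, from Eq.~\eqref{eq: psi-LZ} the function $c\psi(y)=f(y)-\frac1y f(-1/y)$ differs from $f(y)$ by the exponentially small term $\frac1y f(-1/y)$ in the limit $y\to0$ with $y\in\IH$, so $c\,y\psi(y)\sim y\tilde{f}_+(y)$; and $s_{(\pi/2)_-}(y\tilde{f}_+)$ resums the same series by construction. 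Since, by part~$(d)$ of Theorem~1 in~\cite{lewis-Zagier--period}, $\psi$ extends analytically to $\IC'$, while the resummation $s_{(\pi/2)_-}$, a priori analytic only on a half-plane, can be analytically continued by rotating the Laplace contour, their difference is analytic with vanishing asymptotic expansion on a common sector of opening exceeding $\pi$; a Watson/Nevanlinna-type uniqueness theorem then forces it to vanish identically, giving $yf(y)=\mathcal{S}^{\mathrm{med}}_{\pi/2}[y\tilde{f}_+]$.

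The main obstacle I expect is precisely this last identification: controlling the analytic continuation and the Gevrey-1 error bounds of $c\,y\psi(y)$ uniformly in a sector wide enough (opening strictly greater than $\pi$) to invoke uniqueness, rather than merely on the half-plane where $s_{(\pi/2)_-}$ is a priori defined. This is where the finite-order assumption on the completed $L$-functions $\Lambda_\epsilon$ and the functional equation Eq.~\eqref{eq:functional-Lambda}, which underlie the analyticity of the period function $\psi$ on $\IC'$ in~\cite{lewis-Zagier--period}, become essential. An alternative, more computational route would be to write $s_{(\pi/2)_-}(y\tilde{f}_+)$ directly as the Laplace integral of $\CB[y\tilde{f}_+]$ and recognize it, after the change of variables $\zeta=2\pi\ri w$ and a termwise integration legitimized by $A_m=A_{-m}$, as the integral representation of the Lewis--Zagier period $c\,y\psi(y)$; this bypasses the borderline sector-opening issue but requires justifying the interchange of summation with the principal-value integration at the poles lying on the contour.
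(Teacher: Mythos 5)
Your setup---the explicit Borel transform $\CB[y\tilde{f}_+](\zeta)=-\tfrac{1}{\pi\ri}\sum_{m\ne 0}A_m/(\zeta-2\pi\ri m)$, the Stokes constants $2A_m$, and the discontinuity $\mathrm{disc}_{\pi/2}\big[y\tilde{f}_+\big](y)=2f(-1/y)$---matches the paper, and your reduction of the theorem to the single identity $s_{(\pi/2)_-}\big(y\tilde{f}_+\big)(y)=c\,y\psi(y)$ for $\Re(y)>0$ is a correct reformulation (the half-plane $\Re(y)<0$ then follows by analytic continuation inside $\IH$). The gap is in how you propose to prove that identity. A Watson/Nevanlinna uniqueness argument requires $c\,y\psi(y)$ to admit $y\tilde{f}_+(y)$ as a \emph{uniform} Gevrey-1 asymptotic expansion on a sector of opening strictly greater than $\pi$ bisected by the positive real axis. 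Your justification---that $c\psi(y)=f(y)-\tfrac{1}{y}f(-1/y)$ differs from $f(y)$ by an exponentially small term---only works for $y\to 0$ non-tangentially inside $\IH$; on a sector bisected by $\IR_{>0}$ you must control $\psi$ along real and lower-half-plane directions, where $f(-1/y)$ is not even defined and this decomposition gives you nothing. Establishing the uniform Gevrey-1 remainder bounds for the period function there is precisely the nontrivial analytic input; you flag it as ``the main obstacle'' but do not resolve it, so the argument as written does not close.

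The paper avoids this entirely by taking the route you relegate to a final aside: it computes the lateral sums $s_0$ and $s_\pi$ termwise, obtaining $-2\sum_{m\ne 0}A_m\,\mathbf{e}_1(-m/y)$ with $\mathbf{e}_1$ the incomplete-gamma kernel of Eq.~\eqref{eq: e1-def}, recognizes this as $-\tfrac{1}{\pi\ri}\psi_1(-1/y)$ for the Lewis--Zagier function $\psi_1$ of Eq.~\eqref{eq: psi1-LZ}, and closes the loop with the identities $y\psi_1(y)=\psi_1(1/y)$, $\psi_1(-y)=\psi_1(y)$, and $c_*f(y)=\psi_1(y)+\tfrac{1}{y}\psi_1(-1/y)$ from~\cite{lewis-Zagier--period}. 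Note also that your stated obstacle for this computational route---principal-value issues from poles lying on the integration contour---does not arise: all poles sit on the imaginary axis, while the relevant Laplace contours are the (rotated) rays at angles $0$ and $\pi$, which avoid them, and the termwise integration is justified by absolute convergence. Carrying out that computation yields the paper's proof; as it stands, your primary argument leaves its key step unproven.
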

\begin{proof} 
Due to the parity of the coefficients, $L_1$ is identically zero, while 
\be
L_0(s) = 2 \sum_{m=1}^{\infty} \frac{A_m}{m^s} \, ,
\ee
so that the two $L$-functions in Eqs.~\eqref{eq: L0-L1-L-def} and~\eqref{eq: L0-L1-Lminus-def} reduce to 
\be
L_\pm(s)=\frac{L_0(s)}{2} \, .
\ee
As a consequence of Proposition~\ref{prop:lewis-Zagier-qms}, and particularly Eqs.~\eqref{eq: LZ-ftildeP} and~\eqref{eq: LZ-ftildeM}, the formal power series $\tilde{f}_{\pm}$ can be written as
\begin{equation}
    \tilde{f}_{\pm}(y)=\pm\frac{2}{\pi \ri}\sum_{n=0}^\infty (2n+1)!\sum_{m>0}\frac{A_m}{\rho_m^{2n+2}}y^{2n+1} \, ,
\end{equation}
where $\rho_m = 2 \pi \ri m$, $m \in \IZ_{>0}$, as before.
Recall that the median resummation at angle $\pi/2$ can be expressed in terms of the Borel--Laplace sums at angles $0$ and $\pi$ using Eq.~\eqref{eq: median2}. We have that
\be \label{eq: median-up}
\begin{aligned}
    \mathcal{S}^{\mathrm{med}}_{\frac{\pi}{2}}\big[y\tilde{f}_+\big](y) &=\begin{cases}
        s_0\big[y\tilde{f}_+\big](y) + \frac{1}{2}\mathrm{disc}_{\frac{\pi}{2}}\big[y\tilde{f}_+\big](y)\,, & \Re(y)>0 \, ,\\
        \\
        s_\pi\big[y\tilde{f}_+\big](y) - \frac{1}{2}\mathrm{disc}_{\frac{\pi}{2}}\big[y\tilde{f}_+\big](y)\,, & \Re(y)<0\,,
    \end{cases}
\end{aligned}
\ee
for $\Im(y)>0$. 
To compute the Borel--Laplace sums of $\tilde{f}_+$, let us first evaluate its Borel transform as in the proof of Proposition~\ref{prop:stokes}, that is,  
\be
\begin{aligned}
    \borel\big[y\tilde{f}_+\big](\zeta) =\frac{2}{\pi \ri} \sum_{n=0}^\infty \sum_{m>0} \frac{A_m}{\rho_m^{2n+2}} \zeta^{2n+1} =-\frac{1}{\pi \ri}\sum_{m\in\IZ_{\neq 0}} \frac{A_m}{\zeta-\rho_m} \, ,
\end{aligned}
\ee
where we have exchanged the order of summation, resummed the geometric series over the index $n$, and applied the identity \begin{equation} \label{eq: identity-frac}
    \frac{1}{(x-y)(x+y)}= \frac{1}{2x(x-y)}+\frac{1}{2x(x+y)}
\end{equation}
in the second step.
Then, the Borel--Laplace sums at angles $0$ and $\pi$ are
\begin{subequations}
    \begin{align}
    s_0\big[y\tilde{f}_+\big](y)&=-\frac{1}{\pi\ri}\int_0^{\infty } \re^{-\zeta/y} \sum_{m\in\IZ_{\neq 0}}\frac{A_m}{\zeta-\rho_m}\, d\zeta=- 2\sum_{m\in\IZ_{\neq 0}} A_m \e\left(-\frac{m}{y}\right) \, , \quad  & y>0 \, , \label{eq: BL-PMzero}\\ 
    s_{\pi}\big[y\tilde{f}_+\big](y)&=-\frac{1}{\pi\ri}\int_0^{-\infty} \re^{-\zeta/y} \sum_{m\in\IZ_{\neq 0}}\frac{A_m}{\zeta-\rho_m}\,  d\zeta=-2\sum_{m\in\IZ_{\neq 0}} A_m \e\left(-\frac{m}{y}\right)\, , \quad & y<0 \,, \label{eq: BL-PMpi}
    \end{align}
\end{subequations}
where we have introduced the analytic function $\e\colon\IC\setminus \ri\IR_{\geq 0}\to\IC$ defined by\footnote{The notation in Eq.~\eqref{eq: e1-def} was suggested by M. Kontsevich as the function $\e$ plays a similar role in various examples (\emph{e.g.}, the case of $\sigma, \sigma^*$ in~\cite[from min.~21.55]{Fantini-talk-IHES} and the spectral trace of local $\IP^2$ in~\cite{FR1phys}).}  
\begin{equation} \label{eq: e1-def}
    \e(z):=\frac{1}{2\pi \ri}\int_0^{\infty} \re^{-2\pi t} \frac{dt}{t+\ri z} =\frac{1}{2\pi \ri} \re^{2\pi \ri z} \, \Gamma(0,2\pi \ri z) \, ,
\end{equation}
where $\Gamma(s, 2\pi \ri z)$ denotes the upper incomplete gamma function. 
Note that the Borel--Laplace sums above can be analytically continued to $\Re(y)>0$ and $\Re(y)<0$, respectively, as a consequence of the properties of the function $\e$.
Besides, by the standard residue argument, the discontinuity is
\be
    \mathrm{disc}_{\frac{\pi}{2}}\big[y\tilde{f}_+\big](y)=2\sum_{m>0}A_m \re^{-2\pi\ri m/y}=2f\left(-\frac{1}{y}\right) \,, \quad \Im(y)>0\,. \label{eq: discfP}
\ee

We can then apply the results of Section~3, Chapter~II of~\cite{lewis-Zagier--period}. 
In particular, adopting the notation of~\cite[Eqs.~(2.11) and~(2.13)]{lewis-Zagier--period}, we define the function $\psi_1\colon\IC\setminus\ri \IR_{\ge 0} \to \IC$ as
\begin{equation} \label{eq: psi1-LZ}
    \psi_1(y):=2\pi \ri\sum_{m\in\IZ_{\neq 0}} A_m \e(my) \, ,
\end{equation}
which satisfies the properties~\cite[Eq.~(2.10)]{lewis-Zagier--period}
\be \label{eq: psi1-id}
 y\psi_1(y)=\psi_1\left(\frac{1}{y}\right)\, , \quad \psi_1(-y)=\psi_1(y) \,,
\ee 
where the first formula holds for $\Re(y)>0$. Then, we have that~\cite[Eq.~(2.20)]{lewis-Zagier--period}\footnote{The same formula in Eq.~\eqref{eq: f_LZ} holds for $y \in \IH_-$ with a change of sign in the constant $c_*$.}
\be\label{eq: f_LZ}
c_*\,f(y)=\psi_1(y)+\frac{1}{y}\psi_1\left(-\frac{1}{y}\right)\, ,
\ee
for some constant $c_* \in \IC_{\ne0}$.
Let us set the constant $c_*=-2\pi\ri$. Using Eqs.~\eqref{eq: f_LZ} and~\eqref{eq: psi1-id}, we deduce that
\be \label{eq: fftilde-up}
\begin{aligned}
    y f(y)&=\begin{cases}
     \displaystyle -\frac{1}{\pi\ri}\psi_1\left(-\frac{1}{y}\right) + f\left(-\frac{1}{y}\right)  \, , \quad & \Re(y)>0 \, , \\
     & \\
    \displaystyle -\frac{1}{\pi\ri}\psi_1\left(-\frac{1}{y}\right) -f\left(-\frac{1}{y}\right) \, , \quad & \Re(y)<0 \, . 
    \end{cases}
\end{aligned}
\ee
It follows from Eqs.~\eqref{eq: psi1-LZ},~\eqref{eq: BL-PMzero}, and~\eqref{eq: BL-PMpi} that we can identify $\psi_1(-1/y)$ with the Borel--Laplace sum of $y \tilde{f}_+(y)$ along the positive real axis for $\Re(y)>0$, that is,
\be \label{eq: BL-Pzero}
-\frac{1}{\pi\ri}\psi_1\left(-\frac{1}{y}\right)=s_0\big[y\tilde{f}_{+}\big](y)\,, \quad \Re(y)>0 \, ,
\ee
and with its Borel--Laplace sum along the negative real axis for $\Re(y)<0$, that is, 
\be \label{eq: BL-Ppi}
-\frac{1}{\pi\ri}\psi_1\left(-\frac{1}{y}\right)=s_\pi\big[y\tilde{f}_{+}\big](y) \, , \quad \Re(y)<0 \, .
\ee
Substituting the discontinuity in Eq.~\eqref{eq: discfP} and the Borel--Laplace sums in Eqs.~\eqref{eq: BL-Pzero} and~\eqref{eq: BL-Ppi} into the RHS of the formula for the median resummation in Eq.~\eqref{eq: median-up}, we obtain\footnote{We recall that the Borel--Laplace sum of an asymptotic series $\phi \in \IC [\![y]\!]$ satisfies the property $s_\theta[y\phi\,]=y\,s_\theta[\phi]$.} Eq.~\eqref{eq: fftilde-up} and conclude. 
\end{proof}

A result analogous to Theorem~\ref{thm:lewis-Zagier_median} applies for an odd sequence of complex numbers $A_m$, $m \in \IZ_{\ne 0}$, \emph{i.e.}, such that $A_m=-A_{-m}$. In this case, the generating function in Eq.~\eqref{eq: LZ-f} satisfies $f(-y)=f(y)$ for $y \in \IC\setminus\IR$, and $L_1(s)=2 \sum_{m=1}^{\infty} \tfrac{A_m}{m^s}$ is the only non-trivial $L$-function. Moreover, we expect a similar result for Maass cusps forms that are invariant under a discrete subgroup of $\mathsf{PSL}_2(\IZ)$.

\begin{rmk}
The example of even Maass cusp forms discussed above displays an interesting feature. Indeed, the cocycle $\psi(y)$ in Eq.~\eqref{eq: psi-LZ} is proportional to the function $\psi_1(y)$ in Eq.~\eqref{eq: psi1-LZ} by means of Eq.~\eqref{eq: f_LZ}. Consequently, it follows from Theorems~\ref{thm:lewis-Zagier_QM} and~\ref{thm:lewis-Zagier_median} that the Borel--Laplace sums $s_0\big[y\tilde{f}_\pm\big]$ are proportional to $\psi(-1/y)$. We expect this property to hold more generally when both Conjectures~\ref{conj:quantum_modular1} and~\ref{conj:quantum_modular2} (or, equivalently, Conjectures~\ref{conj:quantum_modular1-Q} and~\ref{conj:quantum_modular2-Q}) are verified---potentially, under some additional assumptions on the $q$-series. In other words, we expect the cocycles of (a well-defined class of) $q$-series whose asymptotic expansions have a modular resurgent structure to occur as Borel--Laplace sums. We leave further investigation to future work.
\end{rmk}

\subsection{From cusp forms}\label{sec:modular}
In this section, we consider the $L$-function of a general cusp form. Breaking the modular invariance of the cusp form by adding to it a function written in terms of the exponential integral $\mathbf{e}_1$ in Eq.~\eqref{eq: e1-def}, we build a holomorphic function whose asymptotic series is modular resurgent. In particular, its Stokes constants are the Fourier coefficients of the original cusp form. Additionally, we show that the said holomorphic function, although not expressible in the form of a $q$-series, satisfies the theses of Conjectures~\ref{conj:quantum_modular1} and~\ref{conj:quantum_modular2}.
\begin{theorem}\label{thm:modular}
Let $g\colon\IH\to\IC$ be a cusp form of weight $\omega\in\frac{1}{2}\IZ$ for $\mathsf{SL}_2(\IZ)$ whose Fourier coefficients are the complex numbers $A_m$, $m\in\IZ_{>0}$. Let $f\colon\IH\to\IC$ be the function
\be\label{eq:f_qm}
f(y):=\frac{1}{2}g\left(-\frac{1}{y}\right)+\sum_{m>0} A_m \mathbf{e}_1\left(-\frac{m}{y}\right) \, ,
\ee
where $\mathbf{e}_1$ is defined in Eq.~\eqref{eq: e1-def}. Then, the Gevrey-1 asymptotic series $\tilde{f} \in\IC\llbracket y\rrbracket$ obtained expanding $f(y)$ for $y \to 0$ with $\Im(y)>0$ has a \emph{modular resurgent structure} with simple poles at $\zeta_m=2\pi\ri m$, $m\in\IZ_{>0}$, and corresponding Stokes constants $-A_m$. 
In addition, 
\begin{itemize}
  \item $\CS^{\mathrm{med}}_{\frac{\pi}{2}}[\tilde{f}](y)=f(y)$ for every $y\in\IH$;
  \item $f(y)$ is a holomorphic quantum modular form of weight $-\omega$ for $\mathsf{SL}_2(\IZ)$.
\end{itemize}
\end{theorem}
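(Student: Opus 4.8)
The plan is to treat the three assertions in turn, drawing everything from the large-argument asymptotics of $\e$, its integral representation Eq.~\eqref{eq: e1-def}, and the modularity of the cusp form $g$. First I would isolate the asymptotic series. Since $g$ is a cusp form, $g(-1/y)=\sum_{m>0}A_m\re^{-2\pi\ri m/y}$ is exponentially small as $y\to0$ with $\Im(y)>0$, so it contributes nothing to $\tilde f(y)$, which is therefore the termwise expansion of $\sum_{m>0}A_m\e(-m/y)$. Expanding $\tfrac{1}{t+\ri z}$ in Eq.~\eqref{eq: e1-def} (equivalently, using the asymptotics of the incomplete gamma function) gives $\e(z)\sim\sum_{k\ge0}\frac{(-1)^k k!}{(2\pi\ri)^{k+2}z^{k+1}}$, so after setting $z=-m/y$ and summing over $m$,
\be
\tilde f(y)=\sum_{n\ge1}c_n y^n,\qquad c_n=-\frac{\Gamma(n)}{(2\pi\ri)^{n+1}}L_g(n),\quad L_g(s):=\sum_{m>0}\frac{A_m}{m^s}.
\ee
Rewriting $c_n=\frac{\Gamma(n)}{2\pi\ri}\sum_{m>0}\frac{-A_m}{(2\pi\ri m)^n}$ matches the form of Eq.~\eqref{eq: cn-Am}, so Proposition~\ref{prop:stokes} yields simple poles at $\zeta_m=2\pi\ri m$ with Stokes constants $-A_m$. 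As the completed $L$-function of a cusp form is entire, $L_g$ continues to $\{\Re(s)<0\}$, and the conditions of Definition~\ref{def:modular_res_struct} hold (with the negative half-tower absent, $A_{-m}=0$).

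For the median resummation I would first compute the Borel transform as in Proposition~\ref{prop:stokes}, $\CB[\tilde f](\zeta)=\frac{1}{2\pi\ri}\sum_{m>0}\frac{A_m}{\zeta-2\pi\ri m}$, whose poles all lie on the ray $\arg\zeta=\pi/2$. A change of variable matching the Laplace integral to Eq.~\eqref{eq: e1-def} identifies the lateral Borel--Laplace sum $s_0(\tilde f)(y)=\sum_{m>0}A_m\e(-m/y)$ (continued to $\Re(y)>0$), and similarly $s_\pi$ on $\Re(y)<0$. The discontinuity across $\pi/2$ follows by residues from Eq.~\eqref{eq: Stokes1-poles}, giving $\pm\sum_{m>0}A_m\re^{-2\pi\ri m/y}=\pm g(-1/y)$. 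Inserting $s_0$, $s_\pi$ and this discontinuity into the median formula Eq.~\eqref{eq: median2} reassembles exactly $f(y)=\tfrac12 g(-1/y)+\sum_{m>0}A_m\e(-m/y)$: the exponentially small half-discontinuity $\tfrac12 g(-1/y)$ is precisely the correction the median adds to the Borel--Laplace sum, so that $\CS^{\mathrm{med}}_{\pi/2}\tilde f=f$ on all of $\IH$, $f$ being the representative that is holomorphic across the Stokes ray.

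For quantum modularity I would reduce to the generators $S,T$ in Eq.~\eqref{eq:gen_TS}. Writing $r(y):=\sum_{m>0}A_m\e(my)$, one has $f(-1/y)=\tfrac12 g(y)+r(y)$, so the weight-$(-\omega)$ cocycle for $S$ is $h_S[f](y)=y^\omega f(-1/y)-f(y)$; substituting $f$ and using $g(-1/y)=y^\omega g(y)$ makes the $g$-contributions combine, leaving an expression built from $r(y)$ and $r(-1/y)$. The crux is to show that this cocycle, a priori holomorphic only on $\IH$, continues holomorphically to the cut plane $\IC'=\IC\setminus\IR_{\le0}$ demanded by Definition~\ref{def: holoQM}: here I would recast $r$ via Eq.~\eqref{eq: e1-def} as an Eichler-type integral of $g$ and exploit the weight-$\omega$ modularity of $g$ to cancel the branch cut of $\e$ along $\ri\IR_{>0}$, the rapid decay of $g$ at the cusp securing convergence despite the polynomial growth of the Fourier coefficients $A_m$. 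The cocycle for $T$ is not trivial because $f$ is not a $q$-series; this is where the change of coordinate announced in the statement enters, using the genuine periodicity of $g$ to control $h_T[f]$.

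I expect the principal obstacle to be exactly this holomorphic continuation of the $S$-cocycle across the Stokes ray $\ri\IR_{>0}$ into $\IC'$, together with the precise matching of the branches of $\e$ against the modularity of $g$; the interchange of summation and integration, justified by the cusp bound on the $A_m$, and the treatment of $T$ for the non-periodic $f$ are the secondary technical points.
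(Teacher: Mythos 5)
Your treatment of the first two assertions coincides with the paper's: the same termwise expansion of $\e(-m/y)$ giving $c_n=-\Gamma(n)(2\pi\ri)^{-n-1}L_g(n)$, the same appeal to Proposition~\ref{prop:stokes} and to Hecke's continuation of $L_g$ for the modular resurgent structure, and the same identification of the lateral Borel--Laplace sums with $\sum_{m>0}A_m\e(-m/y)$ via the integral representation in Eq.~\eqref{eq: e1-def}, combined with the residue computation of the discontinuity $g(-1/y)$ and assembled through Eq.~\eqref{eq: median2}. Your observation that the holomorphic extension of the $S$-cocycle across the ray $\ri\IR_{>0}$ hinges on the modularity of $g$ cancelling the branch-cut discontinuities of the two $\e$-sums is correct, and in fact more explicit than what the paper's proof records.

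The gap is in your handling of the second generator. You keep $T$, but for $T$ one has $c=0$, so Definition~\ref{def: holoQM} demands that $h_T[f]$ extend holomorphically to $\IC_T=\IC$, not merely to a cut plane. Since $f$ is not periodic, $h_T[f](y)=f(y+1)-f(y)$ contains the contribution $\tfrac12\left[(y+1)^{\omega}-y^{\omega}\right]g(y)$ (after writing $g(-1/(y+1))=(y+1)^{\omega}g(y)$ and $g(-1/y)=y^{\omega}g(y)$), and $g$ itself does not continue past $\IR$; invoking ``the genuine periodicity of $g$'' and an unspecified change of coordinate does not produce an extension of this cocycle to all of $\IC$, and no such argument appears in the paper. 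The paper sidesteps the problem by choosing the generating set $\{S,\gamma\}$ with $\gamma=TS$: for both of these $c\neq 0$, the $g$-contributions cancel completely by modularity, and each cocycle reduces to a combination of $\e$-sums, namely $h_S[f](y)=y^{\omega}\sum_{m>0}A_m\e(my)-\sum_{m>0}A_m\e(-m/y)$ and $h_\gamma[f](y)=y^{\omega}\sum_{m>0}A_m\e\left(-\tfrac{my}{y-1}\right)-\sum_{m>0}A_m\e(-m/y)$, which only need to extend to the cut planes $\IC_S$ and $\IC_\gamma$. To close your argument you should replace $T$ by $TS$ in your generating set (or show separately, via the cocycle relation, that verifying the extension for $S$ and $TS$ suffices).
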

\begin{proof}
Recall that the asymptotic expansion of the exponential function $\mathbf{e}_1(z)$ in Eq.~\eqref{eq: e1-def} in the limit $z \to \infty$ is~\cite[\href{https://dlmf.nist.gov/8.11.E2}{(8.11.E2)}]{NIST:DLMF}
\be
\mathbf{e}_1(z) = \frac{1}{2 \pi \ri} \sum_{n=0}^\infty \frac{ (-1)^n n!}{(2 \pi \ri z)^{n+1}} \, .
\ee
It follows that the formal power series $\tilde{f}$ reads
\be
\tilde{f}(y)=-\frac{1}{2\pi\ri}\sum_{m>0}A_m\sum_{n=0}^\infty\frac{n!}{(2\pi\ri m)^{n+1}}y^{n+1} =-\frac{1}{2\pi\ri}\sum_{n=1}^\infty \sum_{m>0}\frac{A_m}{(2\pi\ri m)^{n}} \Gamma(n) y^{n}
\, .
\ee
Hence, its perturbative coefficients are of the form in Eq.~\eqref{eq: cn-Am} with $\zeta_m=2\pi\ri m$. By Proposition~\ref{prop:stokes}, $\tilde{f}$ has a simple resurgent structure with a tower of simple poles located at $\zeta_m$ for $m\in\IZ_{>0}$ and corresponding Stokes constants $-A_m$. Moreover, since the $L$-series 
\be
L_g(s):=\sum_{m>0}\frac{A_m}{m^s}
\ee 
admits a meromorphic continuation to the whole complex $s$-plane due to a well-known result by Hecke~\cite[Section~5]{Lang1987}, $\tilde{f}$ is modular resurgent.  

Let us now compute the median resummation applying Eq.~\eqref{eq: median2}. Let $\Im(y)>0$ and fix a small angle $\theta>0$. We have that
\be
\ba
\CS^{\mathrm{med}}_{\frac{\pi}{2}}[\tilde{f}](y)&=-\frac{1}{2\pi\ri}\int_0^{\ri \re^{\ri\theta}\infty} \re^{-\zeta/ y} \sum_{n=1}^\infty \sum_{m>0}\frac{A_m}{(2\pi\ri m)^{n}}\zeta^{n-1} d\zeta+\frac{1}{2}\sum_{m>0}A_m \re^{-2\pi\ri m/ y} \notag\\
&=\frac{1}{2\pi\ri}\sum_{m>0}A_m \, \int_0^{\ri \re^{\ri\theta}\infty} \re^{-\zeta/ y} \frac{d\zeta}{\zeta-2\pi\ri m} +\frac{1}{2} g\left(-\frac{1}{y}\right) \notag\\
&=\frac{1}{2\pi\ri}\sum_{m>0}A_m\, \int_0^{\infty} \re^{-2\pi t} \frac{dt}{t-\frac{\ri m}{y}}+\frac{1}{2} g\left(-\frac{1}{y}\right) \, , 
\ea
\ee
where we used the absolute convergence of the integrand in the second step. The desired result then follows from the definitions in Eqs.~\eqref{eq: e1-def} and~\eqref{eq:f_qm}.

Finally, let us verify the quantum modularity property. 
For simplicity, we choose a pair of generators of $\mathsf{SL}_2(\IZ)$ different from before, namely, $S$ and $\gamma:=TS$, where $T$ and $S$ are the standard generators in Eq.~\eqref{eq:gen_TS}. Then, the corresponding cocycles are
\begin{subequations}
\begin{align}
h_S[f](y)&= y^{\omega}\sum_{m>0} A_m \mathbf{e}_1(my)-\sum_{m>0} A_m \mathbf{e}_1\left(-\frac{m}{y}\right)\,, \\
h_\gamma[f](y)&= y^\omega \sum_{m>0} A_m \mathbf{e}_1\left(-\frac{my}{y-1}\right)-\sum_{m>0} A_m \mathbf{e}_1\left(-\frac{m}{y}\right) \, ,
\end{align}
\end{subequations}
respectively, where we implicitly used the modularity of the cusp form $g$. 
\end{proof}

\section{Conclusions}\label{sec:conclusion}
In this paper, we identified and studied a special class of resurgent Gevrey-1 asymptotic series that appear in various contexts, particularly in topological string theory on toric CY threefolds and in the theory of Maass cusp forms.

Motivated by the recent mathematical and physical interest, we introduced the notion of a \emph{modular resurgent series}, whose Borel plane displays an infinite tower of singularities, the secondary resurgent series are trivial, and the Stokes constants are coefficients of an $L$-function. 
We proposed a new perspective on these \emph{modular resurgent structures} by taking the point of view of the Stokes constants and their generating and Dirichlet series.
Generalizing the strong-weak resurgent symmetry of local $\IP^2$~\cite{Rella22, FR1phys}, we presented the broader paradigm of \emph{modular resurgence}. Certain canonical pairs of MRSs obtained by asymptotically expanding a pair of $q$-series are in fact connected through a global exact symmetry. Here, each $q$-series equals the discontinuity of the asymptotic expansion of the other, while the two $L$-functions whose coefficients are the Stokes constants of the MRSs satisfy a combined functional equation. 
Supported by numerous examples, we conjectured that the median resummation of MRSs that come from the asymptotic expansion of a $q$-series is effective and produces holomorphic quantum modular forms.

As a concrete benchmark, we illustrated the workings of the modular resurgence paradigm via explicit computations in the example of the weak and strong coupling asymptotic expansions of the spectral trace of local $\IP^2$. 
Finally, we proved that a large class of MRSs originating from the theory of Maass cusp forms obeys our conjectural statements. More general cusp forms are also shown to give rise to MRSs, although they satisfy a slightly altered version of the paradigm and conjectures.

Let us highlight three main research directions that arise from the investigation performed in this paper and are to be pursued in follow-up work. 

Firstly, we would like to broaden the supporting basis for our paradigm of modular resurgence. A bigger and more varied pool of examples might help us deepen our understanding of the critical role played by the $L$-functions constructed from the Stokes constants and their functional equation. At the same time, a complete characterization of these ingredients might steer us towards proving Conjectures~\ref{conj:quantum_modular1} and~\ref{conj:quantum_modular2} (and Conjectures~\ref{conj:quantum_modular1-Q} and~\ref{conj:quantum_modular2-Q}), clarifying the relation between them, and their link to the modular resurgence paradigm. 

Secondly, our definition of modular resurgent structure easily lends itself to generalization in different directions. Instructed by topological string theory and complex CS theory, we would like to enlarge the domain of application of modular resurgence to incorporate more general patterns of singularities in the Borel plane, \emph{e.g.}, multiple towers or rays, as well as to make contact with vector-valued quantum modular forms. The paradigm and conjectures would then need to be modified accordingly. 

Finally, our statements on the median resummation could be placed in a more general context. Indeed, as the first author started addressing in~\cite{borel_reg}, when divergent formal power series arise from the asymptotic expansion of well-defined analytic functions, it is natural to ask which summability method allows us to reconstruct the original functions effectively. It would be interesting to clarify the effectiveness of summability methods in a broader sense, following the path opened by our Conjectures~\ref{conj:quantum_modular1} and~\ref{conj:quantum_modular1-Q} and Conjecture~1.1 in~\cite{costin-garoufalidis}.

\section*{Acknowledgements}
We thank 
Alba Grassi, 
Maxim Kontsevich, 
Marcos Mari\~no, 
Campbell Wheeler,
and 
Don Zagier 
for many useful discussions.
This work has been supported by the ERC-SyG project ``Recursive and Exact New Quantum Theory'' (ReNewQuantum), which received funding from the European Research Council (ERC) within the European Union's Horizon 2020 research and innovation program under Grant No. 810573, and by the Swiss National Centre of Competence in Research SwissMAP (NCCR 51NF40-141869 The Mathematics of Physics).

\addcontentsline{toc}{section}{References}
\bibliographystyle{JHEP}
\linespread{0.4}
\bibliography{localP2-biblio}

\end{document}